\documentclass[12pt]{extarticle}
\RequirePackage{etex}
\usepackage{enumerate}
\usepackage{multicol}
\usepackage{etex}
\usepackage[utf8]{inputenc}
\usepackage{amsfonts}
\usepackage{xcolor}
\usepackage{comment}
\usepackage{amsmath}
\usepackage{amssymb}
\usepackage{amsthm} 
\usepackage{mathrsfs}
\usepackage{stmaryrd}
\usepackage{color}
\usepackage[english]{babel}
\usepackage{fontenc}
\usepackage{url}
\usepackage{graphicx}
\usepackage{diagbox}
\usepackage{enumitem}
\usepackage{bm}       
\usepackage{hyperref}
\hypersetup{
    colorlinks=true,
    linkcolor=blue!80!black,,
    citecolor=blue!50!black,,
    urlcolor=blue
}
\usepackage{caption}
\usepackage{epstopdf}
\usepackage{hyphenat}
\usepackage{float}

\usepackage{tikz-cd}
\usetikzlibrary{shapes,arrows,fit,automata}
\usetikzlibrary{positioning, shapes}
\usetikzlibrary{decorations.pathreplacing}
\usepackage{algorithm}
\usepackage[noend]{algpseudocode}
\usepackage[margin=1in]{geometry} 

\makeatletter
\def\algbackskip{\hskip-\ALG@thistlm}
\makeatother

\usepackage[export]{adjustbox}
\usepackage{tikz}
\usepackage{tikz-qtree}
\usepackage{color}
\usepackage[all]{xy}
\usetikzlibrary{matrix,arrows,decorations.pathmorphing}
\usepackage{booktabs}
\usepackage{array}
\usepackage{nicematrix}
\usepackage{graphicx}
\usepackage{adjustbox}
\usepackage{soul}

\numberwithin{equation}{section}
\newtheorem{theorem}{Theorem}[section]

\newtheorem{lemma}[theorem]{Lemma}
\newtheorem{corollary}[theorem]{Corollary}
\newtheorem{proposition}[theorem]{Proposition}

\theoremstyle{definition}

\newtheorem{definition}[theorem]{Definition}

\newenvironment{example}
{\pushQED{\qed}\examplex}
{\popQED\endexamplex}

\newenvironment{remark}
{\pushQED{\qed}\remarkx}
{\popQED\endremarkx}

\newtheoremstyle{citing}
{}
{}
{\itshape}
{}
{\bfseries}
{\textbf{.}}
{.5em}
{\thmnote{#3}}
{\theoremstyle{citing}
}

\DeclareMathOperator{\rank}{rank}

\newcommand{\PP}{\mathbb{P}}

\newcommand{\cV}{\mathcal{V}}
\newcommand{\cM}{\mathcal{M}}

\newcommand{\cS}{\mathcal{S}}

\newcommand{\cX}{\mathcal{X}}

\newcommand{\NN}{\mathbb{N}}
\newcommand{\CC}{\mathbb{C}}
\newcommand{\RR}{\mathbb{R}}
\newcommand{\ZZ}{\mathbb{Z}}

\title{Algebraic Complexity and Neurovariety of Linear Convolutional Networks}
\author{Vahid Shahverdi}

\date{}

\begin{document}

\maketitle 
\begin{abstract}
In this paper, we study linear convolutional networks with one-dimensional filters and arbitrary strides. The neuromanifold of such a network is a semialgebraic set, represented by a space of polynomials admitting specific factorizations. Introducing a recursive algorithm, we generate polynomial equations whose common zero locus corresponds to the Zariski closure of the corresponding neuromanifold. Furthermore, we explore the algebraic complexity of training these networks employing tools from metric algebraic geometry. Our findings reveal that the number of all complex critical points in the optimization of such a network is equal to the generic Euclidean distance degree of a Segre variety. Notably, this count significantly surpasses the number of critical points encountered in the training of a fully connected linear network with the same number of parameters.
\end{abstract}
\section{Introduction}
 A neural network can be viewed as a family of functions that are parameterized based on its architecture. These functions collectively form a finite-dimensional manifold in the space of continuous functions, often referred to as the \emph{neuromanifold} or \emph{function space} of the network. The goal of training a network is to find the optimal parameters for which its corresponding function captures the underlying pattern of the given data by minimizing a loss function. This optimization task has been widely studied in the realm of machine learning e.g., \cite{pmlr-v119-dukler20a, pmlr-v97-allen-zhu19a, du2018gradient, NEURIPS2018_5a4be1fa}. 

This paper focuses on a specific category of neural networks known as \emph{linear networks}, which employ linear activation functions. Despite relying exclusively on linear functions, these networks are considered as simplified models for the analysis of more complex neural networks. Linear networks and their training process have been studied in \cite{NIPS1988_123, 10.1109/72.392248, NIPS2016_6112, zhou2018critical, pmlr-v80-laurent18a, geometryLinearNets}.

Examples of linear networks include \emph{one-dimensional linear convolutional networks} (1D-LCNs). These networks consist of convolutions with one-dimensional filters. Their architectures are defined by a tuple of filter sizes and strides. Noteworthy studies closely related to our current work include \cite{LCN} and \cite{kohn2023function}, both of which explore the geometry of the neuromanifolds and optimization tasks in 1D-LCNs. These studies reveal that the neuromanifolds of 1D-LCNs are semialgebraic sets consisting of polynomials with specific factorizations. They present various theoretical findings on the neuromanifolds including their dimensions, relative boundaries, and singular points. Furthermore, these studies provide insights into the position of critical points involved in optimizing 1D-LCNs.

In this study, we explore other key aspects of 1D-LCNs, aiming to answer two fundamental questions:

\begin{enumerate}
    \item \textbf{Algebraic Structure:} 
    Can we find polynomial equations whose common zero locus is the Zariski closure of a given 1D-LCN neuromanifold?
    \item \textbf{Algebraic Complexity:} How many critical points exist in the optimization of a 1D-LCN using the quadratic loss function?
\end{enumerate}

 To address the first question, we present an algorithm for generating polynomial equations whose common zero locus corresponds to the Zariski closure of the neuromanifold. In our method, we translate the sparsity of polynomials in a 1D-LCN into common factoring conditions. This condition is studied thoroughly using the classical theory of resultants  \cite{kakie1976resultant}. 
  We compare our algorithm with a conventional approach in \texttt{Macaulay2} \cite{m2}. Our algorithm has a notable speed advantage, surpassing the conventional approach in terms of computational time. However, our method does not yield a radical ideal.

 For the second question, we use the tools from Euclidean distance optimization. The \emph{(generic) Euclidean distance degree}
 is an invariant that quantifies the algebraic complexity of the closest point problems within an algebraic variety. This concept was first introduced in \cite{draisma2016euclidean}, and it is studied in great detail in \cite{MetricAlgGeo}. We see that optimizing a linear network using the quadratic loss function entails finding the closest point using the weighted Frobenius norm on the neuromanifold. For instance, considering the space of bounded rank matrices as the neuromanifold of a fully connected linear network, the number of critical points arising while training such a network is determined by the Eckart--Young Theorem \cite{kohn2024geometry}. In the case of a 1D-LCN, we demonstrate that the neuromanifold is derived through a Segre map followed by a linear morphism. Furthermore, we demonstrate that the count of all complex critical points involved in the training process of a 1D-LCN is determined by the generic Euclidean distance degree of a Segre variety. This reveals that the algebraic complexity of training a 1D-LCN is significantly higher than that of its fully connected counterpart with the same number of parameters. 
 
The structure of our paper is as follows: In Section \ref{sec:pre}, we revisit the definitions and fundamental properties of 1D-LCNs and their neuromanifolds. Additionally, we explore the domain of Euclidean distance optimization over an algebraic variety. Moving forward, Section \ref{subsec:ideal_functionspace} is dedicated to introducing our algorithm, a key player in generating an ideal where the common zero locus seamlessly aligns with the Zariski closure of our neuromanifold. Finally, our exploration in Section \ref{sec:Eucl} navigates the details of the algebraic complexity entailed in training a 1D-LCN with the quadratic loss function. For convenience, in Table \ref{tab:symbols}, we list essential notations and their descriptions related to 1D-LCNs.

\section{Preliminaries}
\label{sec:pre}
In this section, we delve into the properties of one-dimensional linear convolutional networks and their neuromanifolds within the scope of our discussion. Furthermore, to investigate the algebraic complexity of our networks, we recall various notions of Euclidean distance degrees. These key tools are integral to the subsequent sections. 
\subsection{Convolutions and Convolutional Linear Networks}
A one-dimensional linear convolutional network (1D-LCN)  is a type of neural network designed for processing one-dimensional sequences or signals. The architecture of such a network is determined by a tuple of filter sizes $\mathbf{k}=(k_1,\ldots,k_L)\in \NN^{L}$, strides $\mathbf{s}=(s_1,\ldots,s_L)\in \NN^L$, and the input dimension
$d_0\in \ZZ^{>0}$. A 1D-LCN contains linear maps $\alpha:\RR^{d_0} \to \RR^{d_L}$ such that $\alpha$ can be written as 
\begin{equation}
    \label{eq:comp-convo}
    \alpha_{w_L,s_L}\circ \cdots \circ \alpha_{w_1,s_1},
\end{equation}
 where $w_\ell \in \RR^{k_\ell}$ is a filter and each $\alpha_{w_\ell,s_\ell}$ is a \emph{convolution}, defined by
\begin{align}
\label{eq:convoo}
\alpha_{w_\ell,s_\ell}:\RR^{d_{\ell-1}} \to \RR^{d_\ell}, \quad {x} \, \mapsto \, \left [ \sum_{j=0}^{k-1} w[j] \cdot x[is+j] \right ]_i^\top,
\end{align}
with $d_\ell = \frac{d_{\ell-1}-k_\ell}{s_\ell}+1$ for $\ell=1,\ldots,L$. 

The composition of $L$ convolutions in \eqref{eq:comp-convo} is again a convolution $\alpha_{w,s}:\RR^{d_0} \to \RR^{d_L}$ with filter size $k:=k_1+\sum_{l=2}^L(k_l-1)\prod_{i=1}^{l-1}s_i$ and stride $s:= s_1 \cdots s_L$ (\cite[Proposition~2.2]{LCN}).
 In our analysis, we intentionally exclude considerations of input/output dimensions, unless the input/output data is our central focus. This allows us to deduce the architecture of a linear convolutional network based solely on the tuples $\mathbf{k}$ and $\mathbf{s}$.
\begin{example}
     \label{ex:L2-s_1=2}
     Consider the architecture $\mathbf{k}=(2,2)$, $\mathbf{s}=(2,1)$. 
     Then the corresponding 1D-LCN comprises convolutions $ \alpha_{w,s}= \alpha_{w_2,1}\circ \alpha_{w_1,2}$ with strides $s=2$ and filters given by
         $$w=[w_2[0]w_1[0],w_2[0]w_1[1],w_2[1]w_1[0],w_2[1]w_1[1]]^\top \in \RR^4.$$
     
\end{example}
\subsection{Neuromanifolds and Neurovarieties}
\label{sec:neuroman}
\begin{definition}
    \label{def:neuroman}
    A \emph{neuromanifold} $\cM_{\mathbf{k},\mathbf{s}} $ of a 1D-LCN architecture with filter sizes $\mathbf{k}=(k_1,\ldots,k_L)$, and strides $\mathbf{s}=(s_1,\ldots,s_L)$ is the set of all filters $w$ such that the convolution $\alpha_{w,s}$ that can be written as $\alpha_{w_L,s_L}\circ \cdots \circ \alpha_{w_1,s_1}$. In here, $\alpha_{w_\ell,s_\ell}$ is a convolution with filter $w_\ell$ of size
    $k_\ell$, and stride $s_\ell$.
\end{definition}
The neuromanifold $\mathcal{M}_{\mathbf{k},\mathbf{s}}$ is a semialgebraic set with an algebraic dimension $k_1+\cdots+k_L - (L-1)$ (\cite[Theorem~2.4]{kohn2023function}). In this paper, if $\mathcal{M}_{\mathbf{k},\mathbf{s}}$ is Zariski closed, we refer to it as \emph{neurovariety}.
Moreover, it can be parameterized by its filters, which implies $\mathcal{M}_{\mathbf{k},\mathbf{s}}$ is irreducible. For instance, in Example \ref{ex:L2-s_1=2}, the neurovariety $\cM_{(2,2),(2,1)}\subset \RR^4_{A,B,C,D}$
is a variety obtained by the zero locus of the quadratic polynomial 
$AD-BC$.

The relation between convolutions and sparse polynomials can be seen by the following isomorphism
\begin{equation} 
\label{eq:polynomials}
\pi_s(w) := \sum_{j=0}^{k-1} w[j]x^{s(k-1-j)}y^{sj},
\end{equation} 
from $\RR^k$ to the vector space  $\RR[x^s,y^s]_{k-1}$—that is, homogeneous polynomials in $x^s$ and $y^s$ of degree $k-1$. Furthermore, if $s=s_1\cdots s_L$ then the composition of convolutions $\alpha_{w_L,s_L}\circ \cdots \circ \alpha_{w_1,s_1}$ is the convolution $\alpha_{w,s}$ whose filter $w$ can be computed via polynomial multiplication
\begin{equation}
    \label{eq:pol-mult}
     \pi_1(w)=\pi_{S_L}(w_L) \cdot \pi_{S_{L-1}}(w_{L-1}) \cdots \pi_{S_3}(w_3) \cdot \pi_{S_2}(w_2) \cdot \pi_{1}(w_1),
\end{equation}
where $S_\ell := \prod_{i=1}^{\ell-1} s_i$ for $\ell>1$ (\cite[Proposition~2.2]{kohn2023function}). Note that the final stride $s_L$ has no impact on the filter $w$; therefore, from this point onward, we set $s_L=1$.

We also denote the \emph{complex neurovariety} $\mathcal{M}^{\mathbb{C}}_{\mathbf{k},\mathbf{s}}$ as the set of all complex filters that can be factorized according to the network architecture with complex filters in each layer. The following proposition describes all homogeneous polynomials associated with
its (complex) neuromanifold.
\begin{proposition}[\cite{kohn2023function}, Proposition~3.1]
\label{prop:functionspace-poly} 
If $\pi_1$ is the map defined in \eqref{eq:polynomials} and $\pi_1^\mathbb{C}$ is its complex counterpart, then:
\begin{align}
    \label{eq:line1}
    \pi_1 \left(\mathcal M_{\mathbf k,\mathbf s}\right) & = \{P\in \RR[x,y]_{k-1}: P=P_L\cdots P_1, \quad P_i\in \RR[x^{S_i},y^{S_i}]_{k_i-1}\},\\
    \label{eq:line2}
    \pi_1(\overline{\mathcal M}_{\mathbf k,\mathbf s}) & = \{P\in \RR[x,y]_{k-1}: P=P_L\cdots P_1,\quad  P_i\in \mathbb{C}[x^{S_i},y^{S_i}]_{k_i-1}\},\\
    \label{eq:line3}
    \pi_1^{\mathbb C}\left(\mathcal M^{\mathbb{C}}_{\mathbf k,\mathbf{s}}\right)=\pi_1^{\mathbb C}(\overline{\mathcal M}^{\mathbb{C}}_{\mathbf k,\mathbf s})&=\{P\in \mathbb{C}[x,y]_{k-1}:P=P_L\cdots P_1, \quad P_i\in \mathbb{C}[x^{S_i},y^{S_i}]_{k_i-1} \}.
\end{align}
\end{proposition}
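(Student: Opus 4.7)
The plan is to prove the three equalities in turn, leveraging the linear isomorphism $\pi_s : \mathbb{R}^k \to \mathbb{R}[x^s,y^s]_{k-1}$ and the multiplicativity identity \eqref{eq:pol-mult}. For \eqref{eq:line1}, I would simply unwind the definitions: by construction a filter $w \in \mathcal{M}_{\mathbf{k},\mathbf{s}}$ decomposes into real filters $w_\ell \in \mathbb{R}^{k_\ell}$, and \eqref{eq:pol-mult} translates this composition into $\pi_1(w) = P_L \cdots P_1$ with $P_\ell := \pi_{S_\ell}(w_\ell) \in \mathbb{R}[x^{S_\ell}, y^{S_\ell}]_{k_\ell-1}$. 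Conversely, a factorization of the prescribed form yields real filters $w_\ell = \pi_{S_\ell}^{-1}(P_\ell)$ whose composition realizes $w$. The identical reasoning over $\mathbb{C}$ establishes that $\pi_1^{\mathbb{C}}(\mathcal{M}^{\mathbb{C}}_{\mathbf{k},\mathbf{s}})$ equals the set on the right-hand side of \eqref{eq:line3}.

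To complete \eqref{eq:line3}, I must show this complex image is already Zariski closed, so that it coincides with $\pi_1^{\mathbb{C}}(\overline{\mathcal{M}}^{\mathbb{C}}_{\mathbf{k},\mathbf{s}})$. Polynomial multiplication descends to a morphism
\[
\Phi : \mathbb{P}^{k_1-1}\times \cdots \times \mathbb{P}^{k_L-1} \longrightarrow \mathbb{P}(\mathbb{C}[x,y]_{k-1}),
\]
well-defined because $\mathbb{C}[x,y]$ is an integral domain, so a product of nonzero factors is nonzero. Since $\Phi$ is a morphism from a product of projective varieties, its image is Zariski closed. The affine cone over this image—together with the origin, which arises whenever some individual factor vanishes in the affine parameterization—coincides with $\pi_1^{\mathbb{C}}(\mathcal{M}^{\mathbb{C}}_{\mathbf{k},\mathbf{s}})$, so this set equals its own Zariski closure.

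For \eqref{eq:line2}, one inclusion is immediate from $\overline{\mathcal{M}}_{\mathbf{k},\mathbf{s}} \subseteq \overline{\mathcal{M}}^{\mathbb{C}}_{\mathbf{k},\mathbf{s}} \cap \mathbb{R}^k$ together with \eqref{eq:line3}. For the reverse, I would prove that $\mathcal{M}_{\mathbf{k},\mathbf{s}}$ is Zariski dense in $\overline{\mathcal{M}}^{\mathbb{C}}_{\mathbf{k},\mathbf{s}}$. Concretely, I pick real filters $(w_1,\ldots,w_L)$ whose associated polynomials $P_\ell = \pi_{S_\ell}(w_\ell)$ are pairwise coprime over $\mathbb{C}$; at such a point, the Jacobian of polynomial multiplication attains full rank $k_1 + \cdots + k_L - (L-1)$, matching the complex dimension of $\overline{\mathcal{M}}^{\mathbb{C}}_{\mathbf{k},\mathbf{s}}$ recalled after Definition~\ref{def:neuroman}. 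This exhibits a real-analytic submanifold of the expected real dimension inside $\mathcal{M}_{\mathbf{k},\mathbf{s}}$, forcing Zariski density of the real locus in its complexification and hence $\overline{\mathcal{M}}^{\mathbb{C}}_{\mathbf{k},\mathbf{s}} \cap \mathbb{R}^k \subseteq \overline{\mathcal{M}}_{\mathbf{k},\mathbf{s}}$.

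The most delicate step is this density argument. It hinges on verifying that the Jacobian of polynomial multiplication attains full rank at a real coprime configuration—essentially a Sylvester-type resultant computation, or equivalently the fact that multiplication is a submersion onto its image at points where no two factors share a common root. Once this rank statement is in place, the remaining pieces of the proof are formal.
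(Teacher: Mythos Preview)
The paper does not contain its own proof of this proposition: it is quoted verbatim as \cite[Proposition~3.1]{kohn2023function} and used as a black box, so there is nothing to compare your argument against here.

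That said, your outline is sound. Equality \eqref{eq:line1} and the first equality in \eqref{eq:line3} are indeed immediate from \eqref{eq:pol-mult}; closedness of the complex image via the projective morphism $\Phi$ is the standard argument. For \eqref{eq:line2}, your density strategy is the right one, and the logic ``smooth real point of full dimension $\Rightarrow$ real locus Zariski dense in the irreducible complexification $\Rightarrow$ $\overline{\mathcal M}_{\mathbf k,\mathbf s}=\mathcal M^{\mathbb C}_{\mathbf k,\mathbf s}\cap\mathbb R^k$'' is correct. The one point that deserves a more careful sentence is the Jacobian rank claim: the tangent directions $Q_\ell$ are constrained to the \emph{sparse} subspaces $\mathbb C[x^{S_\ell},y^{S_\ell}]_{k_\ell-1}$, so the usual Sylvester/coprimality argument for dense polynomial multiplication does not apply verbatim. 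You should either invoke the dimension statement already recalled after Definition~\ref{def:neuroman} (which gives the generic rank directly), or observe that the kernel of the differential $(Q_1,\ldots,Q_L)\mapsto\sum_\ell Q_\ell\prod_{j\neq\ell}P_j$ at a coprime point consists exactly of the rescalings $(\lambda_1 P_1,\ldots,\lambda_L P_L)$ with $\sum\lambda_\ell=0$, which has dimension $L-1$ regardless of sparsity. Either route closes the gap.
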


Proposition \ref{prop:functionspace-poly} implies that $\mathcal{M}^{\mathbb{C}}_{\mathbf{k},\mathbf{s}}$ is the complexification of $\overline{\mathcal{M}}_{\mathbf k,\mathbf s}$. Thus, the term ``complex neurovariety" aptly refers to its Zariski closed characteristics.
\paragraph{Observation:}If a stride $s_i$ is equal to $1$ for some $1 \le i \le L-1$ then we can consider another architecture $(\Tilde{\mathbf{k}},\Tilde{\mathbf{s}})$ that is obtained by merging the $(i+1)$-th and $i$-th layers in $(\mathbf{k},\mathbf{s})$.
By Proposition \ref{prop:functionspace-poly}, it is clear that both neurovarities 
$\overline{\cM}_{\mathbf{k},\mathbf{s}}$ and $\overline{\cM}_{\Tilde{\mathbf{k}},\Tilde{\mathbf{s}}}$ are equal. 
For instance, consider the architecture $(\mathbf{k},\mathbf{s}) = ((2,2,2),(1,2,1))$. Then, we have 
     \begin{align}    \pi_1(\overline{\cM}_{\mathbf{k},\mathbf{s}})&=\{(ex^2+fy^2)(cx+dy)(ax+by)\in \RR[x,y]_4: a,b,c,d,e,f \in \CC\}\\
     \label{eq:reduced-k=3,2}
     &=\{(dx^2+ey^2)(ax^2+bxy+cy^2) \in \RR[x,y]_4: a,b,c,d,e \in \CC\}.
     \end{align}
     The set \eqref{eq:reduced-k=3,2} describes the polynomials in $\pi_1(\overline{\mathcal{M}}_{\Tilde{k},\Tilde{s}})$, where $(\Tilde{\mathbf{k}},\Tilde{\mathbf{s}})=((3,2),(2,1))$. This observation leads us to the following definition:
     \begin{definition}
         \label{def:reduced}
         An architecture $(\mathbf{k},\mathbf{s})$ is \emph{reduced} if $k_i>1$ for every $i$ and $s_i>1$ for $1\le i \le L-1$.
     \end{definition}
\subsection{Segre Varieties and Neurovarieties}
\label{sec:segre}
Given a tuple $\mathbf{k}=(k_1,\ldots,k_L)\in \NN^L$, the Segre variety $\cS^{\CC}_{\mathbf{k}}\subset \CC^{k_1\times \cdots \times k_L}$ is the space of rank one tensors
\begin{equation}
    \label{eq:ten-ten}
    \cS^{\CC}_{\mathbf{k}}= \{w_1 \otimes \cdots \otimes w_L \mid w_i \in \CC^{k_i}, 1\le i\le L\}.
\end{equation}
The Segre variety $\cS^{\CC}_{\mathbf{k}}$ can be obtained by the common zero locus of all $2 \times 2$ minors derived from a symbolic tensor of size $\mathbf{k}$. We also denote the real rank one tensor as $\cS_{\mathbf{k}}$.

The connection between Segre varieties $\cS^{\CC}_{\mathbf{k}}$ and neurovarieties ${\mathcal{M}}_{\mathbf{k},\mathbf{s}}^{\CC}$ is motivated by Example \ref{ex:L2-s_1=2}. In fact, $\cM_{\mathbf{k},\mathbf{s}}^{\CC}$ is equal to $\Psi(\cS^{\CC}_{\mathbf{k}})$ where $\Psi:\CC^{k_1\times \cdots \times k_L}\to \CC^k$ is a linear map. For instance, $\mathcal{M}^{\CC}_{(2,2),(1,1)}$ is the image of the following linear map:
\begin{align*}
    \Psi|_{\cS^{\CC}_{(2,2)}}:\cS^{\CC}_{(2,2)} &\longrightarrow \mathcal{M}^{\CC}_{(2,2),(1,1)}, \\
    (w_2[0],w_2[1]) \otimes (w_1[0],w_1[1]) &\mapsto (w_2[0]w_1[0],w_2[0]w_1[1]+w_2[1]w_1[0],w_2[1]w_2[1]).
\end{align*}
The linear map $\Psi$ induces a rational map $\PP({\Psi})$ from  $\PP^{k_1\times \cdots \times k_L-1}$ to $\PP^{k-1}$. Since both $\PP(\cS^{\CC}_{\mathbf{k}})$ and $\PP(\cM^{\CC}_{\mathbf{k},\mathbf{s}})$ are parametrized by $\PP^{k_1-1} \times \cdots \times \PP^{k_L-1}$, the restriction $\PP(\Psi)|_{\PP(\cS_{\mathbf{k}}^{\CC})}$ is a morphism. Moreover, this morphism is birational if the architecture $(\mathbf{k},\mathbf{s})$ is reduced and $L>1$ (\cite[Corollary~5.4]{kohn2023function}).

\subsubsection{Polar Classes of Segre Varieties and Neurovarieties}
Consider a projective variety $\cX \subset \mathbb{P}^{n-1}$, its dual variety $\mathcal{Y} \subset \mathbb{P}^{n-1}$, and the $(n-2)$-dimensional conormal variety $\mathcal{N}_{\cX,\mathcal{Y}} \subset \mathbb{P}^{n-1} \times \mathbb{P}^{n-1}$. The $i$-th \emph{polar class} of $\cX$, denoted as $\delta_i(\cX)$, counts the points in $\mathcal{N}_{\cX,\mathcal{Y}} \cap (L_1 \times L_2)$, where $L_1$ and $L_2$ are generic linear spaces of $\mathbb{P}^{n-1}$ with dimensions $n-i-1$ and $i+1$, respectively. 

\begin{lemma}
    \label{lem:segre+neuro}
    Let $(\mathbf{k},\mathbf{s})$ be a reduced architecture with $L > 1$. Then, the polar classes associated with the Segre variety $\cS_{\mathbf{k}}^{\CC}$ and the neurovariety $\cM^{\CC}_{\mathbf{k},\mathbf{s}}$ are equal.
\end{lemma}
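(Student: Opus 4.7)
The plan is to exploit the birational morphism $\pi := \PP(\Psi)|_{\PP(\cS^{\CC}_{\mathbf{k}})}$ from $X := \PP(\cS^{\CC}_{\mathbf{k}}) \subset \PP^{M-1}$ to $Y := \PP(\cM^{\CC}_{\mathbf{k},\mathbf{s}}) \subset \PP^{k-1}$, where $M = k_1 \cdots k_L$, constructed in Section \ref{sec:segre}. Because $\pi$ is the restriction of a linear projection and is a regular morphism on $X$, the center $C := \PP(\ker \Psi)$ is disjoint from $X$, and we have $\pi^*\cO_Y(1) = \cO_X(1)$. Dually, $\Psi^*$ yields a linear embedding $\pi^*\colon (\PP^{k-1})^* \hookrightarrow (\PP^{M-1})^*$ whose image is the locus $C^\perp$ of hyperplanes through $C$.

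The key step is to build a birational correspondence between the conormal varieties $\mathcal{N}_Y \subset Y \times (\PP^{k-1})^*$ and $\mathcal{N}_X \subset X \times (\PP^{M-1})^*$. At a smooth point $y \in Y$ whose preimage $x \in X$ lies off the exceptional locus of $\pi|_X$, the differential $d\pi_x$ is an isomorphism $T_x X \to T_y Y$, so a hyperplane $H \subset \PP^{k-1}$ contains $T_y Y$ if and only if $\pi^*H \subset \PP^{M-1}$ contains $T_x X$. Hence the map $(y, H) \mapsto (\pi^{-1}(y), \pi^*H)$ embeds $\mathcal{N}_Y$ birationally onto the subvariety $\{(x, H') \in \mathcal{N}_X : H' \supset C\}$.

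To conclude, pick generic $L_1 \subset \PP^{k-1}$ of dimension $k - i - 1$ and $L_2 \subset (\PP^{k-1})^*$ of dimension $i + 1$, so that $\delta_i(Y) = \#(\mathcal{N}_Y \cap (L_1 \times L_2))$. Setting $L_1' := \overline{\pi^{-1}(L_1)} \subset \PP^{M-1}$ (of dimension $M - i - 1$) and $L_2' := \pi^*(L_2) \subset (\PP^{M-1})^*$ (of dimension $i + 1$), the correspondence identifies this intersection bijectively with $\mathcal{N}_X \cap (L_1' \times L_2')$. Since the dimensions of $L_1'$ and $L_2'$ agree with those required in the definition of $\delta_i(X)$, and the class of a linear subspace in the Chow ring of $\PP^{M-1} \times (\PP^{M-1})^*$ depends only on its dimension, the intersection number equals $\delta_i(X)$.

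The main obstacle is transversality: $L_1'$ and $L_2'$ are not fully generic, being constrained to contain $C$ and to lie in $C^\perp$, respectively. I therefore need a Kleiman-type argument showing that $\mathcal{N}_X \cap (L_1' \times L_2')$ is nevertheless zero-dimensional and reduced, so that its cardinality realizes the correct intersection class. This should follow from the fact that the exceptional locus of $\pi|_X$ is a proper closed subset, so for generic $L_1$ and $L_2$ on the $Y$ side all intersection points arise from the locus where $\pi$ is a local isomorphism; the correspondence $\Phi$ then preserves the intersection multiplicities, closing the argument.
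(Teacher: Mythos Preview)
Your conormal-correspondence approach is sound in outline and is essentially the mechanism behind Piene's invariance theorem. The paper takes a different, shorter route: it invokes the invariance of polar classes under generic projection \cite{piene1978polar,holme1988geometric} as a black box, and to bring the non-generic $\Psi$ into scope it composes with a further \emph{generic} projection $\PP(\Phi)\colon\PP^{k-1}\dashrightarrow\PP^{n-1}$ for some $n$ with $1+\dim Y<n\le k$. Then $\Phi$ preserves $\delta_i(Y)$ by genericity, while $\Phi\circ\Psi$ has center disjoint from $X$ (since the center of $\Psi$ already is and $\Phi$ is generic), so Piene's argument gives $\delta_i(X)=\delta_i\bigl((\Phi\circ\Psi)(X)\bigr)=\delta_i\bigl(\Phi(Y)\bigr)=\delta_i(Y)$. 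This composition trick sidesteps your transversality problem entirely.

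Your self-identified gap is real, and the Kleiman sketch as stated does not close it: Kleiman--Bertini requires a group acting transitively on the candidate linear spaces, but the constraints $C\subset L_1'$ and $L_2'\subset C^\perp$ destroy that transitivity. What would actually finish your argument is (i) that the map $(y,H)\mapsto(\pi^{-1}(y),\pi^*H)$ is a local isomorphism of schemes near each intersection point---this holds because $\pi$ is \'etale at $x$ and $\pi^*$ is a linear embedding on the dual factor---so intersection multiplicities transfer from the $Y$ side; combined with (ii) the fact that in $A^*\bigl(\PP^{M-1}\times(\PP^{M-1})^*\bigr)$ the class of $L_1'\times L_2'$ depends only on the \emph{bidimension} $(\dim L_1',\dim L_2')$, not merely the total dimension as you wrote, so that once the intersection is zero-dimensional and reduced its cardinality equals the intersection number $\delta_i(X)$.
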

\begin{proof}
The polar classes are known to be invariant under proper generic projections (\cite{piene1978polar} and \cite{holme1988geometric}). In our case, we aim to show that the image of the projection $\PP(\Phi \circ \Psi): \PP(\cS_{\mathbf{k}}^{\CC}) \to \PP^{n-1}$ preserves the polar classes. Here, $\PP(\Phi): \PP(\cM^{\CC}_{\mathbf{k},\mathbf{s}}) \to \PP^{n-1}$ represents a generic projection, and $n$ satisfies the relation $1+\dim \PP(\cM^{\CC}_{\mathbf{k},\mathbf{s}})=1+ \sum_{i=1}^{L} (k_i-1)< n \le k$. Note that, since $\PP(\Psi)|_{\PP(\cS_{\mathbf{k}}^{\CC})}$ is a morphism, the center of the projection $\PP(\Psi)$ is disjoint from $\PP(\cS_{\mathbf{k}}^{\CC})$. Additionally, given that $(\mathbf{k},\mathbf{s})$ is reduced and $L>1$, we have $1+\dim \PP(\cM^{\CC}_{\mathbf{k},\mathbf{s}})<k$. Under these conditions, it follows that for every $i$, $\delta_i(\PP(\cS_{\mathbf{k}}^{\CC}))$ and $\delta_i(\PP(\cM^{\CC}_{\mathbf{k},\mathbf{s}}))$ are equal (see the proof of \cite[Theorem~4.1]{piene1978polar}).
\end{proof}
\subsection{(Generic) Euclidean distance degree}
Let $\mathcal{V}$ be a real algebraic variety in $\mathbb{R}^n$, and denote its complexification in $\mathbb{C}^n$ as $\mathcal{V}^{\mathbb{C}}$. The respective smooth locus is denoted by $\mathcal{V}_{\operatorname{reg}}$ and $\mathcal{V}_{\operatorname{reg}}^{\mathbb{C}}$.

For given data $u \in \RR^n$ and a symmetric matrix $T \in \RR^{n \times n}$, we define the \emph{squared distance function} $\operatorname{dist}_{T,u}$ as $\operatorname{dist}_{T,u}^{\CC} |_{\mathcal{V}_{\operatorname{reg}}}$, where $\operatorname{dist}_{T,u}^{\CC}$ is given by
\begin{align}
    \label{eq:dist}
    \operatorname{dist}_{T,u}^{\CC}: \mathcal{V}_{\operatorname{reg}}^{\CC} \to \CC, \quad x \mapsto (x - u)^\top \cdot T \cdot (x - u).
\end{align}
\begin{definition}
\label{def:ed-ged}
    Let $u \in \RR^n$ be generic data, and let $T \in \RR^{n \times n}$ be a generic symmetric matrix (resp., $T=I_n$). We denote the \emph{generic EDdegree} (resp., \emph{EDdegree}) of $\mathcal{V}$ as the number of complex critical points of \eqref{eq:dist}. We write this degree as $\operatorname{EDdeg}(\cV)$ (resp., $\operatorname{gEDdeg}(\cV)$).
\end{definition}
We observe that if the variety $\mathcal{V}$ is in general coordinates, then $\operatorname{gEDdeg}(\mathcal{V}) = \operatorname{EDdeg}(\mathcal{V})$. In fact, when $u \in \mathbb{R}^n$ is generic data, $\operatorname{gEDdeg}(\mathcal{V})$ represents the highest number of critical points for any quadratic distance function $\operatorname{dist}_{T,u}^\mathbb{C}$ (\cite{maxim2020defect}).
\begin{remark}
    \label{rmk:polar}
    We can extend the Definition \ref{def:ed-ged} for projective varieties. If $\cV \subset \PP^{n-1}$ then we define $\operatorname{EDdeg(\cV)}:= \operatorname{EDdeg(C(\cV))}$ and $\operatorname{gEDdeg(\cV)}:= \operatorname{gEDdeg(C(\cV))}$, where $C(\cV)$ is the affine cone of $\cV$. It is known that the generic EDdegree of a projective variety $\cV\subset \PP^{n-1}$ is equal to the sum of its polar classes $\sum_{i=0}^{n-2} \delta_i(\cV)$ (\cite[Corollary~6.1]{draisma2016euclidean}). Consequently, by Lemma \ref{lem:segre+neuro}, if the architecture $(\mathbf{k},\mathbf{s})$ is reduced and $L>1$ then we deduce that $\operatorname{gEDdeg}(\cS_{\mathbf{k}}) = \operatorname{gEDdeg}(\overline{\cM}_{\mathbf{k},\mathbf{s}})$.
\end{remark}
\begin{example}
\label{ex:norms}
    We denote $\cV_{m\times n}^r$ (resp., $\cV_{m\times n}$) to be the space of rank $r$ (resp., full rank) matrices of size $m \times n$, where $r \le \min(m,n)$. For generic matrices $U\in \cV_{m \times n}, B\in \cV_{mn\times mn}$ and $A \in \cV_{n \times k}$ with $k\ge n$, we consider the following optimization problems:
    \begin{align}
    \label{eq:frob-matrix}
        &\min_{M \in \cV_{m\times n}^r}\lVert M-U \rVert_F^2  = \min_{M \in \cV_{m\times n}^r} \operatorname{tr}((M-U) (M-U)^\top)\\
        \label{eq:weighted-frob}
        &\min_{M \in \cV_{m\times n}^r}\lVert M-U \rVert_{AA^\top}^2 =\min_{M \in \cV_{m\times n}^r} \operatorname{tr}((M-U) AA^\top (M-U)^\top)\\
        \label{eq:gen-frob}
        &\min_{M \in \cV_{m\times n}^r} 
        \operatorname{vec}(M-U)^\top \cdot B \cdot \operatorname{vec}(M-U)=\min_{M \in \cV_{m\times n}^r} 
        \operatorname{vec}(M-U)^\top \cdot \frac{B+B^\top}{2} \cdot \operatorname{vec}(M-U),
    \end{align}
    where $\operatorname{vec}(M-U)\in \RR^{mn}$ is the vectorization of the matrix $M-U$.
    By the Eckart--Young Theorem, one can verify that \eqref{eq:frob-matrix} has $\binom{\min(m,n)}{r}$ critical points, all belonging to the smooth locus of $\cV_{m\times n}^r$. This number represents the EDdegree of $\cV_{m\times n}^r$. The complex critical points associated with \eqref{eq:gen-frob} are enumerated by the generic EDdegree of $\cV_{m\times n}^r$ (\cite[Example~7.11]{draisma2016euclidean} and \cite[Section~A.2]{geometryLinearNets}). Finally, the optimization problem \eqref{eq:weighted-frob} can be written in the form of \eqref{eq:frob-matrix}. To see this, note that $M':= M(AA^\top)^{\frac{1}{2}}$ belongs to $\cV_{m \times n}^r$ and $U':= U(AA^\top)^{\frac{1}{2}}$ is a generic matrix in $\cV_{m \times n}$. Hence, both \eqref{eq:weighted-frob} and \eqref{eq:frob-matrix} admit the same number of critical points, which is the EDdegree of $\cV_{m\times n}^r$.
\end{example}

\begin{remark}
    In Example \ref{ex:norms}, we observed that on the variety $\cV_{m\times n}^r$, both optimization problems with Frobenius norm $\lVert \cdot \rVert_{F}$
    and weighted Frobenius norm $\lVert \cdot \rVert_{AA^\top}$ admit the same number of critical points. However, in Section \ref{sec:Eucl}, we will see that this parity does not hold universally for all subvarieties $\cV \subset \cV_{m\times n}$. 
\end{remark}

\begin{table}[H]
    \centering
    \begin{tabular}{l p{11cm}}
        \textbf{Notation} & \textbf{Description} \\
        \midrule
        $\mathbf{k}=(k_1,\ldots,k_L)$ & Tuple of filter sizes\\
        $\mathbf{s}=(s_1,\ldots,s_{L-1},1)$ & Tuple of strides\\
        $(\mathbf{k}, \mathbf{s})$ and $(\tilde{\mathbf{k}}, \tilde{\mathbf{s}})$ & 1D-LCN architecture and its reduced counterpart\\
        $S_\ell$ & Shorthand for $\prod_{i=1}^{\ell-1}s_i$\\
        $k$ & Size of the output filter with value $k=k_1+\sum_{l=2}^L(k_l-1)S_l$\\
        $\mathcal{M}_{\mathbf{k}, \mathbf{s}}$ & Neuromanifold of a 1D-LCN, as a subset of $\mathbb{R}^k$\\
        $\overline{\mathcal{M}}_{\mathbf{k}, \mathbf{s}}$ & Neurovariety of 1D-LCN obtained by Zariski closure of $\mathcal{M}_{\mathbf{k}, \mathbf{s}}$\\
        $\pi_s$ & Polynomial coefficient map; see~\eqref{eq:polynomials}\\
    \end{tabular}
    \caption{List of notations used in 1D-LCNs.}
    \label{tab:symbols}
\end{table}

\section{The ideal of the complex neurovariety}
\label{subsec:ideal_functionspace}
In this section, we present an algorithm designed to generate a finite set of homogeneous polynomials with integer coefficients such that the zero locus of these polynomials over real and complex numbers corresponds to $\overline{{\mathcal M}}_{\mathbf k,\mathbf s}$ and ${\mathcal M}^{\mathbb{C}}_{\mathbf k,\mathbf s}$, respectively.  To streamline the discussion and leverage the advantages of the algebraically closed field $\CC$, our focus in this section is solely on complex neurovarieties.
\subsection{Decomposition of Sparse Polynomials}
\begin{definition}
\label{decompose-univariate}
Let $s$ be a positive integer.
A univariate polynomial $P \in \mathbb{C}[x]$ can be uniquely written as
$P(x) = p_1(x^{s}) + xp_2(x^{s})+\cdots + x^{s-2}p_{s-1}(x^{s})+x^{s-1}p_{s}(x^{s})$.
We refer to the polynomials $p_i(x^s) \in \mathbb{C}[x^s]$ as the \emph{$s$-decomposition} of $P$. 
\end{definition}
We can extend the $s$-decomposition to homogeneous polynomials $P\in \mathbb{C}[x,y]_m$.
For a given integer $a$, we denote by $\overline{a}^{s}$, the unique reminder in the integer division of $a$ by $s$.
Homogenizing the $s$-decomposition yields
\begin{equation}  
\label{eq:decompose}
P = 
y^{\overline{m}^s}p_1 + x y^{\overline{m-1}^{s}}p_2+ \cdots 
+x^{s-2}y^{\overline{m-s+2}^{s}}p_{s-1}+x^{s-1}y^{\overline{m-s+1}^{s}}p_{s},
\end{equation}
where $p_i \in \mathbb{C}[x^s,y^s]_{\lfloor \frac{m+1-i}{s} \rfloor}$.
     This decomposition defines an isomorphism $$\sigma_s:\mathbb{C}[x,y]_{m}\to \prod_{i=1}^{s} \mathbb{C}[x^s,y^s]_{\lfloor \frac{m+1-i}{s} \rfloor}, \quad P \to (p_1,\ldots,p_s),$$
     and we denote by $\CC[x^s,y^s]_{<0}$ the zero ring.
When applying the map $\sigma_{s_1}$ to a polynomial $P = P_L \cdots P_1$ that is factorized according to \eqref{eq:line3}, we observe that 
\begin{equation}
    \label{eq:s-decompLCN}
    \sigma_{s_1}(P) = \sigma_{s_1}(P_L \cdots P_1) = P_L \cdots P_2 \cdot\sigma_{s_1}(P_1)=P_L\cdots P_3 (P_2\cdot p_{1,1},\ldots,P_2\cdot p_{1,s_1}).
\end{equation}
In \eqref{eq:s-decompLCN}, note that the polynomials $P_2$ and $p_{1,i}$ both belong to $\CC[x^{s_1},y^{s_1}]$, allowing us to consider the multiplication $P_2\cdot p_{1,i}$ as a polynomial associated with a single layer. Consequently, the $s_1$-decomposition of $\cM^{\CC}_{\mathbf{k},\mathbf{s}}$ with $L$ layers leads to $s_1$ complex neurovarieties, each with $L-1$ layers. This fact facilitates the use of induction for computing the ideal of $\cM^{\CC}_{\mathbf{k},\mathbf{s}}$.
\begin{proposition}
\label{prop:decomp_poly_general_star}
Let $\mathbf{k} = (k_1,\ldots,k_L)$ and $\mathbf{s} = (s_1,\ldots,s_L)$ with $L\ge 2$. Then, we have     
\begin{equation}
\label{eq:functionspace_decomp_star}
\pi_1^{\mathbb{C}}\left(\mathcal{M}^{\mathbb{C}}_{\mathbf{k},\mathbf{s}}\right) = \left\lbrace Q \in \pi_1^{\mathbb{C}}\left(\mathcal{M}^{\mathbb{C}}_{\mathbf{k}',\mathbf{s}'}\right)   \; : \; \begin{array}{l}
        
         \gcd(\sigma_{s_1}(Q)) \in \mathbb{C}[x^{s_1},y^{s_1}]_{\ge \frac{k-k_1}{s_1}}
       \end{array}\right\rbrace, 
\end{equation}
where   $\mathbf k' = (k_1+s_1(k_2-1),k_3,\ldots,k_L)$ and $\mathbf s'= (s_1s_2,s_3,\ldots,s_L)$.
\end{proposition}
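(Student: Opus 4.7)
The plan is to prove \eqref{eq:functionspace_decomp_star} by two inclusions, exploiting the key structural fact that for every $\ell \geq 2$ the layer polynomial $P_\ell$ automatically lies in $\CC[x^{s_1},y^{s_1}]$ (since $s_1 \mid S_\ell$), so such factors commute past the decomposition operator $\sigma_{s_1}$ in the sense already displayed in \eqref{eq:s-decompLCN}. For the forward inclusion, given $P = P_L \cdots P_1 \in \pi_1^{\CC}(\cM^{\CC}_{\mathbf{k},\mathbf{s}})$, I would simply regroup it as $P = P_L \cdots P_3 \cdot (P_2 P_1)$: the product $P_2 P_1$ is a polynomial in $\CC[x,y]_{k'_1-1}$, which is admissible as the filter polynomial of the first (merged) layer of $(\mathbf{k}',\mathbf{s}')$, while $P_3,\ldots,P_L$ are valid for the remaining layers, placing $P$ in $\pi_1^{\CC}(\cM^{\CC}_{\mathbf{k}',\mathbf{s}'})$. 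Equation \eqref{eq:s-decompLCN} then shows that every component of $\sigma_{s_1}(P)$ is divisible by $P_L \cdots P_2$, and the direct bookkeeping $\sum_{\ell=2}^L (k_\ell-1)S_\ell/s_1 = (k-k_1)/s_1$ matches the claimed gcd degree.

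For the reverse inclusion, I would take $Q \in \pi_1^{\CC}(\cM^{\CC}_{\mathbf{k}',\mathbf{s}'})$ satisfying the gcd hypothesis, and write $Q = Q'_{L-1} \cdots Q'_2 \cdot Q'_1$ in the factorization coming from $(\mathbf{k}',\mathbf{s}')$. Since $Q'_2,\ldots,Q'_{L-1}$ all lie in $\CC[x^{s_1},y^{s_1}]$, factoring them out of each component of $\sigma_{s_1}(Q)$ and using that $\CC[x^{s_1},y^{s_1}]$ is a UFD gives $\gcd(\sigma_{s_1}(Q)) = (Q'_{L-1}\cdots Q'_2) \cdot \gcd(\sigma_{s_1}(Q'_1))$. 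Computing the degree of $Q'_{L-1}\cdots Q'_2$ in $x^{s_1},y^{s_1}$ as $\sum_{\ell=3}^{L}(k_\ell-1)s_2\cdots s_{\ell-1} = (k-k_1)/s_1 - (k_2-1)$ and subtracting from the hypothesis reduces the problem to showing $\deg \gcd(\sigma_{s_1}(Q'_1)) \geq k_2-1$ in $x^{s_1},y^{s_1}$. Because this gcd lives in the UFD $\CC[x^{s_1},y^{s_1}]$ and has sufficient degree, I can pick a homogeneous divisor $P_2 \in \CC[x^{s_1},y^{s_1}]_{k_2-1}$; since $P_2$ divides every component of the $s_1$-decomposition of $Q'_1$, it divides $Q'_1$ itself in $\CC[x,y]$, so $Q'_1 = P_2 P_1$ with $P_1 \in \CC[x,y]_{k_1-1}$, and then $Q = Q'_{L-1} \cdots Q'_2 \cdot P_2 \cdot P_1$ witnesses membership in $\pi_1^{\CC}(\cM^{\CC}_{\mathbf{k},\mathbf{s}})$.

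The main obstacle I foresee is the final step of the reverse direction, where two gradings interact: degrees in $x,y$ versus degrees in $x^{s_1},y^{s_1}$. One must verify carefully that a homogeneous factor $P_2$ of the gcd in $\CC[x^{s_1},y^{s_1}]$, once known to divide every component $\sigma_{s_1}(Q'_1)_i$, actually divides $Q'_1$ as an element of $\CC[x,y]$; this uses that $\sigma_{s_1}$ is a $\CC[x^{s_1},y^{s_1}]$-module isomorphism. Equally, extracting a divisor of the exact degree $k_2-1$ from a gcd of degree at least $k_2-1$ relies on unique factorization of binary forms. Neither step is deep, but both must be spelled out to keep the two gradings from being confused, and together they drive the whole argument.
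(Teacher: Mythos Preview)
Your proposal is correct and follows essentially the same route as the paper's own proof: both directions proceed by factoring according to $(\mathbf{k}',\mathbf{s}')$, pulling $P_L\cdots P_2$ (resp.\ $Q'_{L-1}\cdots Q'_2$) through $\sigma_{s_1}$ via \eqref{eq:s-decompLCN}, and then matching degrees to extract the extra factor $P_2\in\CC[x^{s_1},y^{s_1}]_{k_2-1}$ from the first-layer polynomial. Your write-up is in fact slightly more explicit than the paper's about the UFD/module-isomorphism step and the interplay of the two gradings, but the underlying argument is identical.
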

\begin{proof}
By Proposition \ref{prop:functionspace-poly}, every polynomial $Q \in \pi_1^{\mathbb{C}}\left(\mathcal{M}^{\mathbb{C}}_{\mathbf{k}',\mathbf{s}'}\right)$ admits a factorization $Q = Q_{L-1}\cdots Q_1$ according to architecture $(\mathbf k',\mathbf s')$. 
Writing $\sigma_{s_1}(Q_1) = (q_{1,1}, \ldots, q_{1,s_1})$, we see
as in \eqref{eq:s-decompLCN} that
 $\sigma_{s_1}(Q)= (Q_{L-1}\cdots Q_2 q_{1,1},\ldots, Q_{L-1}\cdots Q_2 q_{1,s_1})$.
 Since $\deg(Q_{L-1} \cdots Q_2) = \deg(Q)-\deg(Q_1) = k-k_1-s_1(k_2-1)$,  
 the condition that $\sigma_{s_1}(Q)$ has at least $\frac{k-k_1}{s_1}$ common binomials of the form $ax^{s_1}+by^{s_1}$ implies that each $q_{1,i}$ can be written as $ Q_1 q_{0,i}$, where $Q_1\in \mathbb{C}[x^{s_1},y^{s_1}]_{k_2-1}$. Reversing the $s_1$-decomposition gives a factorization $Q = Q_{L-1}\cdots Q_2 Q_1 Q_0$
 according to the architecture $(\mathbf{k},\mathbf{s})$, where $\sigma_{s_1}(Q_0) = (q_{0,1},\cdots,q_{0,s_1})$.
 This shows that $Q \in \pi_1^{\mathbb{C}}\left(\mathcal{M}^{\mathbb{C}}_{\mathbf{k},\mathbf{s}}\right)$.
 
 Conversely, if $P \in \pi_1^{\mathbb{C}}\left(\mathcal{M}^{\mathbb{C}}_{{ \mathbf k},{\mathbf s}}\right)$, then $P$ can be written as $P_L \cdots P_1$ with $P_i \in \mathbb{C}[x^{S_i},y^{S_i}]_{k_i-1}$; see \eqref{eq:pol-mult}. Note that $P_L\cdots P_3 (P_2P_1)$ is also contained in $\pi_1^{\mathbb{C}}\left(\mathcal{M}^{\mathbb{C}}_{\mathbf k',\mathbf s'}\right)$, and by applying the $s_1$-decomposition to $P$, we get $\sigma_{s_1}(P)=(P_L\cdots P_2 p_{1,1},\ldots,P_L\cdots P_2 p_{1,s_1})$ as in \eqref{eq:s-decompLCN}, which shows the desired gcd condition. 
\end{proof}

\begin{corollary}
 \label{cor:gcd_condition_l2}
The variety $\pi_1^{\mathbb{C}}\left(\mathcal{M}^{\mathbb{C}}_{(k_1,k_2),(s_1,1)}\right)$ is the set of all polynomials $Q \in \mathbb{R}[x,y]_{k-1}$ with $\sigma_{s_1}(Q)=(q_1,\ldots,q_{s_1})$  such that
     \begin{equation}
         \label{eq:l2_gcd}
          \gcd(q_{1},\ldots,q_{s_1}) \in \mathbb{C}[x^{s_1},y^{s_1}]_{\ge k_2-1}.
     \end{equation}
 \end{corollary}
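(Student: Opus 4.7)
The plan is to specialize Proposition \ref{prop:decomp_poly_general_star} to the case $L=2$, $\mathbf{k}=(k_1,k_2)$, $\mathbf{s}=(s_1,1)$. After unwinding the definitions of $\mathbf{k}'$ and $\mathbf{s}'$ appearing in that proposition, the outer neurovariety collapses to the whole polynomial space, and only the gcd condition survives.

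First I would compute $\mathbf{k}'$ and $\mathbf{s}'$ from the recipe of the proposition: we have $\mathbf{k}' = (k_1+s_1(k_2-1))$, a single-entry tuple, and $\mathbf{s}' = (s_1 \cdot 1) = (s_1)$. The number of layers thus drops from $L = 2$ down to $L' = 1$, and $k' = k_1 + s_1(k_2-1) = k$ by the formula in Table \ref{tab:symbols}.

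Next, I would observe that for a single-layer architecture the neurovariety equals the entire ambient space: by Proposition \ref{prop:functionspace-poly}, an element of $\pi_1^{\mathbb{C}}(\mathcal{M}^{\mathbb{C}}_{\mathbf{k}',\mathbf{s}'})$ is a product $P_1$ with $P_1 \in \mathbb{C}[x^{S_1},y^{S_1}]_{k'_1-1}$, and since $S_1 = \prod_{i=1}^{0} s_i = 1$ (empty product) together with $k'_1 - 1 = k-1$, this set is precisely $\mathbb{C}[x,y]_{k-1}$. Hence the containment $Q \in \pi_1^{\mathbb{C}}(\mathcal{M}^{\mathbb{C}}_{\mathbf{k}',\mathbf{s}'})$ imposes no constraint beyond $Q$ having the correct degree.

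Finally, I would simplify the degree bound in the gcd condition: substituting $k = k_1 + s_1(k_2 - 1)$ gives $\tfrac{k-k_1}{s_1} = k_2 - 1$. Plugging this bound into the formula of Proposition \ref{prop:decomp_poly_general_star}, together with the triviality of the outer neurovariety condition, yields exactly the characterization claimed in the corollary. There is essentially no obstacle — the corollary is an immediate specialization — the only point warranting a line of justification is the reduction of the $L' = 1$ neurovariety to the full ambient space, which follows from the empty-product convention $S_1 = 1$.
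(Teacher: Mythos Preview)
Your proposal is correct and follows essentially the same approach as the paper: specialize Proposition~\ref{prop:decomp_poly_general_star} to $L=2$, observe that the single-layer neurovariety $\mathcal{M}^{\mathbb{C}}_{\mathbf{k}',\mathbf{s}'}$ with $k'=k$ is the full ambient space $\mathbb{C}[x,y]_{k-1}$, and reduce the degree bound via $\tfrac{k-k_1}{s_1}=k_2-1$. The paper's proof is simply a terser version of what you wrote.
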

\begin{proof}
    Since $L=2$, the single-layer neurovariety $\mathcal{M}_{k',s_1}^{\CC}$ is a vector space isomorphic to $\CC^{k'}$, where $k'=k=k_1+s_1(k_2-1)$. Also, as $k_2-1 = \frac{k-k_1}{s_1}$, then
    the condition  \eqref{eq:functionspace_decomp_star} is equivalent to \eqref{eq:l2_gcd}.
\end{proof}
\paragraph{Elaboration on Example \ref{ex:L2-s_1=2}:} According to Corollary \ref{cor:gcd_condition_l2}, a polynomial $P= Ax^3+Bx^2y+Cxy^2+Dy^3$ belongs to $\pi_1^{\mathbb{C}}\left(\mathcal{M}^{\mathbb{C}}_{(2,2),(2,1)}\right)$ if and only if $q_1 = Bx^2+Dy^2$ and $q_2 = Ax^2+Cy^2$ share a common root in $\CC[x^2,y^2]$. By performing a change of variables, substituting $x^2$ with $x$ and $y^2$ with $y$, we can ascertain that the given condition holds true if and only if $AD-BC=0$.
\begin{example}
\label{ex:l2-poly}
    \sloppy
    Consider the two-layer neurovariety $\mathcal{M}^{\mathbb{C}}_{(3,2),(2,1)}$. The polynomials in $\pi_1^{\mathbb{C}}\left(\mathcal{M}^{\mathbb{C}}_{(3,2),(2,1)}\right)$ are of the form $Q=Ax^4+Bx^3y+Cx^2y^2+Dxy^3+Ey^4$ such that it admits the factorization
    \begin{equation}
        \label{eq:ex-L=2,k=3,2}
        P_2P_1=(dx^2+ey^2)(ax^2+bxy+cy^2).
    \end{equation}
    According to Corollary \ref{cor:gcd_condition_l2}, $Q$ exhibits the factorization \eqref{eq:ex-L=2,k=3,2} if and only if $q_1$ and $q_2$ have a common factor in 
    $\mathbb{C}[x^2,y^2]_{\ge 1}$, where $(q_1,q_2) = \sigma_2(Q) = (Ax^4+Cx^2y^2+Ey^4,Bx^2+Dy^2)$. 
\end{example}

\subsection{Polynomial Resultants and \texorpdfstring{$\mathcal{M}^{\mathbb{C}}_{\mathbf{k},\mathbf{s}}$}{M^C\_k,s}}
\label{sec:equa} 
The condition in \eqref{eq:l2_gcd} had been studied intensively in the past years e.g., 
in \cite{kakie1976resultant}. 
 To provide an algorithm that determines polynomial equations cutting out $\mathcal{M}^{\mathbb{C}}_{\mathbf k,\mathbf s}$,
let us consider $s$ homogeneous polynomials $q_{i} \in \mathbb{C}[x,y]_{n_i \ge 1} \setminus \{0\}$ that are written as
\begin{equation}
    \label{eq:polynomia-result-kak}
    q_{i}(x,y)= A_{0}^{(i)}x^{n_{i}}+A_{1}^{(i)}x^{n_{i}-1}y\cdots +A_{n_{i}}^{(i)}y^{n_{i}}.
\end{equation}
Let $n_{*} := \min \{n_{i}: 1 \le i \le s\}$, and $n^{*} := \max \{n_{i}: 1 \le i \le s\}$. For every $l\ge 0$, we define the following resultant matrix:
$$R_l:=\begin{bNiceMatrix}
   A_0^{(1)}& A_1^{(1)} &\Cdots & A_{n_1}^{(1)} & & &\\
   &A_0^{(1)}& A_1^{(1)} &\Cdots & A_{n_1}^{(1)} & & \\
   & &  & \cdots & \Cdots & \cdots & \\ 
   & & & A_0^{(1)}& A_1^{(1)} &\Cdots & A_{n_1}^{(1)} \\
   \Cdots\\
   A_0^{(s)}& A_1^{(s)} &\Cdots & A_{n_s}^{(s)} & & &\\
   &A_0^{(s)}& A_1^{(s)} &\Cdots & A_{n_s}^{(s)} & & \\
   & &  & \cdots & \Cdots & \cdots & \\ 
   & & & A_0^{(s)}& A_1^{(s)} &\Cdots & A_{n_s}^{(s)}
\end{bNiceMatrix},$$
where there are $\min(0,l-n_{i}+1)$ rows associated with $q_{i}$ for every $i$.
\begin{theorem}[\cite{kakie1976resultant}, Theorem~A.]
\label{thm:kakie}
 The homogeneous polynomials $q_1,\ldots,q_s$ defined in \eqref{eq:polynomia-result-kak} have a common factor of degree at least $m$ if and only if the rank of the resultant matrix $R_{n^{*}+n_{*}-m}$ is less than $n_* +n^* -2m + 2$.    
\end{theorem}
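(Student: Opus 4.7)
The plan is to identify the rank of $R_l$ with the dimension of a graded piece of the ideal generated by the $q_i$, and then to read off the existence of a common factor of degree $\geq m$ from a Hilbert function count. Set $I := (q_1, \ldots, q_s) \subseteq \mathbb{C}[x,y]$ and $I_l := I \cap \mathbb{C}[x,y]_l$. The rows of $R_l$ are by construction the coefficient vectors of the shifted polynomials $x^{l - n_i - j} y^j q_i$ for $1 \leq i \leq s$ and $0 \leq j \leq l - n_i$, which together span $I_l$. Hence $\operatorname{rank}(R_l) = \dim_{\mathbb{C}} I_l = (l+1) - \dim_{\mathbb{C}}(\mathbb{C}[x,y]/I)_l$. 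Writing $h := \gcd(q_1, \ldots, q_s)$ of degree $d$ and $q_i = h \tilde{q}_i$, so that $\gcd(\tilde{q}_1, \ldots, \tilde{q}_s) = 1$ and $\tilde{I} := (\tilde{q}_1, \ldots, \tilde{q}_s)$ satisfies $I = h \cdot \tilde{I}$, the short exact sequence $0 \to (h)/I \to \mathbb{C}[x,y]/I \to \mathbb{C}[x,y]/(h) \to 0$, combined with $(h)/I \cong (\mathbb{C}[x,y]/\tilde{I})(-d)$ and $\dim(\mathbb{C}[x,y]/(h))_l = d$ for $l \geq d$, yields
\[
\dim(\mathbb{C}[x,y]/I)_l \;=\; d \;+\; \dim(\mathbb{C}[x,y]/\tilde{I})_{l-d}.
\]

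I then specialize to $l = n^* + n_* - m$ and split into the two directions. If $d \geq m$, then $\dim(\mathbb{C}[x,y]/I)_l \geq d \geq m$, so $\operatorname{rank}(R_l) \leq l + 1 - m = n^* + n_* - 2m + 1 < n^* + n_* - 2m + 2$. If $d < m$, I use the fact that the polynomials $\tilde{q}_i$ have no common zero in $\mathbb{P}^1_{\mathbb{C}}$ to choose $f, g \in \tilde{I}$ forming a regular sequence with $\deg f = n^* - d$ and $\deg g = n_* - d$. The Koszul resolution of $\mathbb{C}[x,y]/(f,g)$, combined with the inequalities $l - d - (n^* - d) = n_* - m \geq 0$, $l - d - (n_* - d) = n^* - m \geq 0$, and $l - d - (n^* + n_* - 2d) = d - m < 0$, yields
\[
\dim(\mathbb{C}[x,y]/(f,g))_{l-d} \;=\; m - d - 1.
\]
Since $(f, g) \subseteq \tilde{I}$, we obtain $\dim(\mathbb{C}[x,y]/\tilde{I})_{l-d} \leq m - d - 1$, hence $\dim(\mathbb{C}[x,y]/I)_l \leq d + (m - d - 1) = m - 1$, so $\operatorname{rank}(R_l) \geq (l+1) - (m-1) = n^* + n_* - 2m + 2$.

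The hard part will be constructing the regular sequence $f, g \in \tilde{I}$ with the prescribed extreme degrees $n^* - d$ and $n_* - d$. Generic $\mathbb{C}$-linear combinations of the $\tilde{q}_i$ alone need not land in the correct degree when only one or two generators attain the extreme values $n^*$ or $n_*$. The remedy is to also mix monomial multiples $x^a y^b \tilde{q}_j$ of the appropriate total degree: since the $\tilde{q}_j$ vanish nowhere simultaneously on $\mathbb{P}^1$, for each root of $f$ there is some $\tilde{q}_j$ nonvanishing at that root, and a sufficiently generic combination of such monomial multiples of degree $n_* - d$ avoids all roots of $f$ at once, producing the required $g$. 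A minor side case is $m > n_*$, in which some $q_i$ contribute no rows to $R_l$; there $d \leq n_* < m$ automatically, and the rank bound can be verified directly on the smaller matrix that remains.
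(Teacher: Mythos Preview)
This theorem is quoted from \cite{kakie1976resultant} and the paper gives no proof of its own, so there is nothing in the paper to compare your argument against. Your overall strategy---identifying $\operatorname{rank} R_l$ with $\dim I_l$, peeling off the gcd via the exact sequence, and bounding the remaining Hilbert function by a Koszul count on a regular sequence---is sound and does yield the result once the details are in place.

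There is, however, a real gap in your regular-sequence construction. You fix $f$ of degree $n^{*}-d$ first and then try to manufacture $g\in\tilde I_{n_{*}-d}$ coprime to $f$ as a generic combination of monomial multiples $x^{a}y^{b}\tilde q_{j}$. But in degree $n_{*}-d$ the only available multiples are the $\tilde q_{j}$ themselves with $n_{j}=n_{*}$, since $a+b=n_{*}-n_{j}\le 0$ forces $a=b=0$. The coprimality of the $\tilde q_{j}$ only tells you that \emph{some} $\tilde q_{k}$ is nonzero at a given root of $f$, and that $\tilde q_{k}$ may well have $n_{k}>n_{*}$, hence contribute nothing to $\tilde I_{n_{*}-d}$. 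So for a badly chosen $f$ (for instance $f=x\,\tilde q_{j_{*}}$ when only one generator has minimal degree) no admissible $g$ exists.

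The fix is simply to swap the order: set $g=\tilde q_{j_{*}}$ for any index with $n_{j_{*}}=n_{*}$ (nonzero since $d<m\le n_{*}$), and then choose $f\in\tilde I_{n^{*}-d}$ generically. In degree $n^{*}-d$ \emph{every} $\tilde q_{k}$ contributes monomial multiples, so for each root $p$ of $g$ one picks $k$ with $\tilde q_{k}(p)\ne 0$ and a monomial $x^{a}y^{b}$ of degree $n^{*}-n_{k}$ nonvanishing at $p$; thus the evaluation map $\tilde I_{n^{*}-d}\to\CC$ at $p$ is surjective, and a generic $f$ avoids all finitely many roots of $g$. With this correction your Koszul computation and the remainder of the argument go through unchanged.
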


\begin{remark}
\label{rmk:alphastar}
The homogeneous polynomial $q_i$ in Corollary \ref{cor:gcd_condition_l2} belongs to $\CC[x^{s_1},y^{s_1}]_{\lfloor \frac{k-i}{s_1} \rfloor}$. Therefore, there exists a positive integer $r$ such that the first $r$ polynomials $q_1,\ldots, q_{r}$ are in $\CC[x^{s_1},y^{s_1}]_{n^*}$, where $n^*=\lfloor \frac{k-1}{s_1} \rfloor$ and the remaining polynomials $q_{r+1},\ldots,q_{\min(k_1,s_1)}$ are in $\CC[x^{s_1},y^{s_1}]_{n^*-1}$. If $s_1>k_1$ then $q_{k_1+1},\ldots,q_{s_1}$ are zero polynomials.
\end{remark}

\begin{corollary}
     \label{cor:result-two-layer}
     Let $q_1,\ldots,q_{s_1}$ be the homogeneous polynomials in Corollary \ref{cor:gcd_condition_l2} after the change of variables $x^{s_1} \to x$ and $y^{s_1}\to y$. Then,  $\pi_1^{\CC}\left(\cM^{\CC}_{(k_1,k_2),(s_1,1)}\right)$ is the zero locus of the ideal $\mathfrak{I}=\mathfrak{I}_1+\mathfrak{I}_2$ such that
     \begin{enumerate}
         \item $\mathfrak{I}_1$ is the ideal generated by all minors of size $2n^* -2k_2+3$ in the resultant matrix $R_{l_1}$ of $q_1,\ldots,q_{s_1}$, where $l_1=2n^*-k_2$, and
         \item $\mathfrak{I}_2$ is the ideal generated by all minors of size $2n^*-2k_2+4$ in the resultant matrix $R_{l_2}$ of $q_1,\ldots, q_r$, where $l_2 = 2n^*-k_2+1$,
     \end{enumerate}
     and $n^*, r$ are computed in Remark \ref{rmk:alphastar}. In particular, if $r=1$ or $\min(k_1,s_1)=r$ then the ideal $\mathfrak{I}_2$ is trivial or equal to $\mathfrak{I}_1$ respectively, hence $\mathfrak{I} = \mathfrak{I}_1$.
\end{corollary}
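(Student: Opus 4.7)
The plan is to apply Theorem \ref{thm:kakie} to the gcd condition furnished by Corollary \ref{cor:gcd_condition_l2}, splitting on whether the $s_1$-decomposition of $Q$ mixes two consecutive degrees. First I would translate the membership problem: $Q\in\pi_1^\CC(\cM^\CC_{(k_1,k_2),(s_1,1)})$ if and only if the components $q_1,\ldots,q_{s_1}\in\CC[x,y]$ (after the substitution $x^{s_1}\mapsto x$, $y^{s_1}\mapsto y$) share a common factor of degree at least $m:=k_2-1$. Remark \ref{rmk:alphastar} tells me that $q_1,\ldots,q_r$ have degree $n^*$, while $q_{r+1},\ldots,q_{\min(k_1,s_1)}$ have degree $n^*-1$ and $q_i=0$ for $i>\min(k_1,s_1)$.

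I would then feed this gcd condition into Theorem \ref{thm:kakie} in the two possible regimes for $n_*$. If some polynomial has degree $n^*-1$, i.e.\ $r<\min(k_1,s_1)$, then $n_*=n^*-1$; Kakié's prescribed index is $n^*+n_*-m=2n^*-k_2=l_1$ and the rank threshold is $n^*+n_*-2m+2=2n^*-2k_2+3$, so the vanishing of all minors of that size in $R_{l_1}$ is precisely $\mathfrak{I}_1$. If instead every nonzero polynomial has degree $n^*$, i.e.\ $\min(k_1,s_1)=r$, then $n_*=n^*$ and the theorem applied to $q_1,\ldots,q_r$ produces $l_2=2n^*-k_2+1$ with minor size $2n^*-2k_2+4$, recovering $\mathfrak{I}_2$. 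It then remains to check that combining both ideals does not spuriously cut the variety in either regime. Any common factor of $q_1,\ldots,q_{s_1}$ is also a common factor of the sub-collection $q_1,\ldots,q_r$, so $V(\mathfrak{I}_1)\subseteq V(\mathfrak{I}_2)$ by Theorem \ref{thm:kakie}. Conversely, if $q_1,\ldots,q_r$ admit a common factor $g$ of degree $d\ge k_2-1$, then every row of $R_{l_1}$ represents a polynomial in $g\cdot\CC[x,y]_{l_1-d}$, a subspace of dimension $l_1-d+1\le 2n^*-2k_2+2$; hence $\rank R_{l_1}\le 2n^*-2k_2+2<2n^*-2k_2+3$, so $\mathfrak{I}_1$ also vanishes at $Q$ and $V(\mathfrak{I}_1)=V(\mathfrak{I}_2)$ coincides with the neurovariety.

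The ``in particular'' clauses then reduce to counting rows and columns. When $r=1$ the matrix $R_{l_2}$ has only $n^*-k_2+2$ rows, strictly smaller than the prescribed minor size $2n^*-2k_2+4$ (since $n^*\ge k_2-1$), so $\mathfrak{I}_2$ is vacuous. When $\min(k_1,s_1)=r$, both ideals are built from the same $r$ polynomials of degree $n^*$, and a direct row/column comparison identifies the two minor ideals. The main obstacle I anticipate is exactly this bookkeeping across the different parity classes of $(k-1)\bmod s_1$ and the relative sizes of $k_1$ and $s_1$; once Theorem \ref{thm:kakie} is in hand, the rank-drop argument reconciling $\mathfrak{I}_1$ with $\mathfrak{I}_2$ is straightforward.
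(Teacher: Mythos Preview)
Your argument has a genuine gap: the claimed equality $V(\mathfrak{I}_1)=V(\mathfrak{I}_2)$ is false in the very regime where both ideals are needed, and both directions of your justification break.

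The problem is that Theorem~\ref{thm:kakie} is stated for \emph{nonzero} polynomials, so the identification ``$V(\mathfrak{I}_1)=\{\gcd(q_1,\dots,q_{s_1})\ \text{has degree}\ge k_2-1\}$'' only holds on the open locus where every $q_i$ is nonzero. In the regime $r<\min(k_1,s_1)$ there are points $Q$ at which $q_{r+1}=\cdots=q_{\min(k_1,s_1)}=0$; the corresponding rows of $R_{l_1}$ vanish, so the surviving rows (coming only from $q_1,\dots,q_r$, each of degree $n^*$) number $r(n^*-k_2+1)$ and can have rank below $2n^*-2k_2+3$ even when $q_1,\dots,q_r$ share no factor at all. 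Hence $V(\mathfrak{I}_1)$ acquires an extraneous component, and $V(\mathfrak{I}_1)\not\subseteq V(\mathfrak{I}_2)$. The paper's own example $(\mathbf{k},\mathbf{s})=((5,2),(3,1))$ exhibits this: there $\sqrt{\mathfrak{I}_1}=\sqrt{\mathfrak{I}}\cap\langle C,F\rangle$, the component $\langle C,F\rangle$ being exactly the locus $q_3=0$, which is not contained in $V(\mathfrak{I}_2)$. Your converse step is also wrong as written: a common factor $g$ of $q_1,\dots,q_r$ need not divide $q_{r+1},\dots$, so it is not true that ``every row of $R_{l_1}$ represents a polynomial in $g\cdot\CC[x,y]_{l_1-d}$''.

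What the paper actually does is a case analysis \emph{within} the fixed architecture on which blocks of $q_i$ vanish: $\mathfrak{I}_1$ handles the generic case (all $q_i\neq 0$, so $n_*=n^*-1$); $\mathfrak{I}_2$ is added precisely to handle the degeneration $q_{r+1}=\cdots=q_{\min(k_1,s_1)}=0$ (so effectively $n_*=n^*$ among the surviving polynomials); and a separate submatrix argument shows that the remaining degeneration $q_1=\cdots=q_r=0$ is already forced into the neurovariety by $\mathfrak{I}_1$. Your dichotomy on the \emph{architecture} misses that these degenerations occur inside a single architecture and are the whole reason $\mathfrak{I}_2$ is needed.
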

\begin{proof}
    In this setting, the gcd condition in Corollary \ref{cor:gcd_condition_l2} translates to $\gcd(q_1,\ldots,q_{s_1})\in \CC[x,y]_{m\ge k_2-1}$. First, according to Theorem \ref{thm:kakie}, the rank of the resultant matrix $R_{l_1}$ of polynomials $q_1,\ldots, q_{s_1}$ must be less than $n^*+(n^*-1)-2k_2+4$, where $l_1 = n^*+(n^*-1)-(k_2-1)$. This defines the procedure for computing the ideal $\mathfrak{I}_1$. Now, if $q_{r+1}=\cdots= q_{\min(k_1,s_1)}=0$, where $r\ge 2$ and $\min(k_1,s_1)-r\ge 1$, we require another resultant matrix $R_{l_2}$ to satisfy the gcd condition. In this case, the procedure for computing the ideal $\mathfrak{I}_2$ is determined by the condition that the rank of $R_{l_2}$ is less than $2n^*-2k_2+4$, where $l_2=2n^*-(k_2-1)$.
    The last case to consider is when $q_1=\cdots q_{r}=0$. We argue that this is already covered by the ideal $\mathfrak{I}_1$. To see this, note that there exists a submatrix in $R_{l_1}$ such that the rank condition provides
    \begin{equation}
        \label{eq:result-q_i}
        \gcd(q_{r+1}, \ldots,x\cdot q_{r+i}, \ldots,q_{\min(k_1,s_1)})\in \CC[x,y]_{m\ge k_2-1},\quad  1\le i\le \min(k_1,s_1).
    \end{equation}
    It is apparent that in this case, \eqref{eq:result-q_i} implies the gcd condition in \eqref{cor:gcd_condition_l2}.
\end{proof}

\paragraph{Elaboration on Example \ref{ex:l2-poly}:} Let $q_1 = Ax^2 + Cxy + Ey^2$ and $q_2 = Bx + Dy$, yielding $n^*=2$, $n_*=1$, $m=k_2-1=1$, and $r=1$. According to Corollary $\ref{cor:result-two-layer}$, the ideal $\mathfrak{I}$ can be computed only by ensuring $\rank R_{l_1}=\rank R_2 < 3$. The latter is equivalent to satisfying the following relation
\[\begin{vmatrix}
    A & C & E\\
    B & D & 0\\
    0 & B & D
\end{vmatrix} = AD^2+ B^2E-BCD=0.\]

The next example demonstrates that computing $\mathfrak{I}_2$ is necessary. In fact when $L=2$, and $s_1>2$, then the ideal $\mathfrak{I}_2$ might not be trivial.
\begin{example}
    Consider the architecture $\mathbf{k}=(5,2)$ and $\mathbf{s}=(3,1)$. A polynomial $P$
    $$P = Ax^7+Bx^6y+Cx^5y^2+Dx^4y^3+Ex^3y^4+Fx^2y^5+Gxy^6+Hy^7 \in \CC[x,y]_7$$
    is in $\pi_1^{\mathbb{C}}\left(\mathcal{M}^{\mathbb{C}}_{\mathbf{k},\mathbf{s}}\right)$ if and only if it admits the following factorization:
    $$P_2P_1=(fx^3+gy^3)(ax^4+bx^3y+cx^2y^2+dxy^3+ey^4).$$
    To find the ideal $\mathfrak{I}$, first, by Corollary \ref{cor:gcd_condition_l2}, we consider symbolic homogeneous polynomials  $$q_1=Bx^2+Exy+Hy^2, \quad q_2=Ax^2+Dxy+Gy^2, \quad q_3=Cx+Fy$$ such that the condition $\gcd(q_1,q_2,q_3)\in \CC[x,y]_{m\ge 1}$ gives the desired neurovariety. We compute $n^*=2$, $m=k_2-1=1$, and $r=2$. Then the ideal $\mathfrak{I}_1$ (resp., $\mathfrak{I}_2$) is generated by all $3\times 3$ (resp., $4\times 4$) minors in $R_{l_1}=R_2$ (resp., $R_{l_2}=R_3$). The vanishing locus of ideal $\mathfrak{I}=\mathfrak{I}_1+\mathfrak{I}_2$  is $\pi_1^{\mathbb{C}}\left(\mathcal{M}^{\mathbb{C}}_{\mathbf{k},\mathbf{s}}\right)$, which has dimension $6=k_1+k_2-1$; see Section \ref{sec:neuroman}. According to \texttt{Macaulay2}, $\sqrt{\mathfrak I}$ is a prime ideal over the rational numbers and minimally generated by 
    \begin{align*}
   \sqrt{\mathfrak{I}}= \langle &CEG-BFG-CDH+AFH, CEF-BF^2-C^2H, \\
    &CDF-AF^2-C^2G, BDF-AEF-BCG+ACH, \\
    &BDEG-AE^2G-B^2G^2-BD^2H+ADEH+2ABGH-A^2H^2 \rangle,
\end{align*}
 while $\sqrt{\mathfrak{I}_1}$ is not a prime ideal and has a primary decomposition $\sqrt{\mathfrak{I}} \cap \langle F,C \rangle \neq \sqrt{\mathfrak I}$.
\end{example}
In the next step, we introduce an algorithm that generates the polynomial equations defining every
$\mathcal{M}^{\mathbb{C}}_{\mathbf{k},\mathbf{s}}$, recursively. 

\begin{algorithm}[H]
\caption{Vanishing ideal of $\mathcal{M}^{\mathbb{C}}_{\mathbf{k},\mathbf{s}}$ using recursion}
\label{factorial}
\hspace*{\algorithmicindent} \textbf{Input:} $(L,\mathbf{k},\mathbf{s})$\\
    \hspace*{\algorithmicindent} \textbf{Output:} vanishing ideal of $\mathcal{M}^{\mathbb{C}}_{\mathbf{k},\mathbf{s}}$

\begin{algorithmic}[1]

\Function{vanish}{$L,\mathbf{k},\mathbf{s}$}
    \If{$L = 1$}
        \State \textbf{return} $0$ 
    
    \ElsIf{$L = 2$}
        
        \State \textbf{return} $\sqrt{\mathfrak{I}}$ \Comment{See Corollary \ref{cor:result-two-layer}}
        
    \ElsIf{$L > 2$}
    \State $\mathfrak{a}:= \Call{vanish}{L-1,\mathbf{k}',\mathbf{s}'}$ \Comment{$(\mathbf{k}',\mathbf{s}')$ defined in Proposition \ref{prop:decomp_poly_general_star}}
    \State $\mathfrak{b}:= \Call{vanish}{2,(k_1, \frac{k-k_1}{s_1}+1),(s_1,1)}$
        \State \textbf{return} $\sqrt{\mathfrak{a}+\mathfrak{b}}$
    
    \EndIf
\EndFunction
\end{algorithmic}
\end{algorithm}
\begin{theorem}
    \label{thm:alg}
    Algorithm \ref{factorial}, correctly computes the vanishing ideal of the neurovariety $\cM_{\mathbf{k},\mathbf{s}}^\CC$.
\end{theorem}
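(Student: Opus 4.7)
The plan is to proceed by induction on the number of layers $L$, using Proposition \ref{prop:decomp_poly_general_star} as the main structural tool to reduce an $L$-layer question to an $(L-1)$-layer question combined with a single auxiliary two-layer question.

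For the base cases, when $L=1$ the architecture satisfies $s_1=1$, so $\cM^\CC_{(k_1),(1)} = \CC^{k_1}$ and its vanishing ideal is $(0)$, matching the output of the algorithm. When $L=2$, Corollary \ref{cor:result-two-layer} produces an explicit ideal $\mathfrak{I}$ whose zero locus coincides with $\pi_1^\CC(\cM^\CC_{(k_1,k_2),(s_1,1)})$; Hilbert's Nullstellensatz then identifies $\sqrt{\mathfrak{I}}$ with the vanishing ideal, which is precisely what the algorithm returns.

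For the inductive step ($L \ge 3$), the key identification I would make is that the gcd condition appearing in Proposition \ref{prop:decomp_poly_general_star}, namely $\gcd(\sigma_{s_1}(Q)) \in \CC[x^{s_1},y^{s_1}]_{\ge (k-k_1)/s_1}$, is exactly the condition from Corollary \ref{cor:gcd_condition_l2} characterizing the two-layer neurovariety $\cM^\CC_{(k_1,k_2^*),(s_1,1)}$ with $k_2^* := (k-k_1)/s_1 + 1$. A short arithmetic check shows that $s_1$ divides $k-k_1$ (since $k - k_1 = \sum_{l\ge 2}(k_l-1)S_l$ and each $S_l$ contains the factor $s_1$ for $l \ge 2$), that $k_2^*$ is a positive integer, that this auxiliary two-layer architecture has output filter size $k_1 + s_1(k_2^*-1) = k$, and that the degree threshold $k_2^* - 1$ matches $(k-k_1)/s_1$. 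Combined with the fact that the merged architecture $(\mathbf{k}',\mathbf{s}')$ also has output dimension $k$, Proposition \ref{prop:decomp_poly_general_star} rewrites as the set-theoretic identity inside $\CC^k$:
\[ \cM^\CC_{\mathbf{k},\mathbf{s}} \;=\; \cM^\CC_{\mathbf{k}',\mathbf{s}'} \cap \cM^\CC_{(k_1,k_2^*),(s_1,1)}. \]

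Applying the inductive hypothesis to the first factor and the $L=2$ base case to the second, the algorithm returns $\sqrt{\mathfrak{a}+\mathfrak{b}}$, where $\mathfrak{a}$ and $\mathfrak{b}$ are the (radical) vanishing ideals of these two neurovarieties. Since $V(\mathfrak{a}+\mathfrak{b}) = V(\mathfrak{a}) \cap V(\mathfrak{b})$, Hilbert's Nullstellensatz gives $I(\cM^\CC_{\mathbf{k},\mathbf{s}}) = \sqrt{\mathfrak{a}+\mathfrak{b}}$, closing the induction; termination is immediate since each recursive call strictly decreases $L$. The main technical point, rather than a genuine obstruction, is the book-keeping of output dimensions and degree thresholds ensuring that both intersecting neurovarieties live in the same ambient $\CC^k$ and that the gcd condition of Proposition \ref{prop:decomp_poly_general_star} coincides exactly with the one from Corollary \ref{cor:gcd_condition_l2}.
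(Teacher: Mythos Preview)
Your proposal is correct and follows essentially the same approach as the paper's own proof: induction on $L$, with base cases $L=1,2$ handled directly, and the inductive step using Proposition~\ref{prop:decomp_poly_general_star} to write $\cM^\CC_{\mathbf{k},\mathbf{s}}$ as the intersection $\cM^\CC_{\mathbf{k}',\mathbf{s}'} \cap \cM^\CC_{(k_1,\,(k-k_1)/s_1+1),(s_1,1)}$, then concluding via the Nullstellensatz that the vanishing ideal is $\sqrt{\mathfrak a+\mathfrak b}$. Your extra bookkeeping (divisibility of $k-k_1$ by $s_1$, matching ambient dimensions, termination) is all sound and simply fleshes out details the paper leaves implicit.
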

\begin{proof}
Regarding the case where $L=1$, observe that the corresponding neurovariety is the vector space $\mathbb{R}^{k}$. Consequently, the corresponding vanishing ideal is trivially the zero ideal. The case where $L=2$ is explained in Corollary \ref{cor:result-two-layer}. For $L>2$, the equality of sets given in \eqref{eq:functionspace_decomp_star} implies that $\pi_1^{\mathbb{C}}\left(\mathcal{M}^{\mathbb{C}}_{\mathbf{k},\mathbf{s}}\right)= \pi_1^{\mathbb{C}}\left(\mathcal{M}^{\mathbb{C}}_{\mathbf{k}',\mathbf{s}'}\right)\cap \pi_1^{\mathbb{C}}\left({\mathcal{M}}^{\mathbb{C}}_{\left(k_1,\frac{k-k_1}{s_1}+1\right),(s_1,1)}\right)$. Hence the vanishing ideal of $\mathcal{M}^{\mathbb{C}}_{\mathbf{k},\mathbf{s}}$ is equal to $\sqrt{\mathfrak{a} + \mathfrak{b}}$, where $\mathfrak a$ and $\mathfrak b$ are the vanishing ideals of $\mathcal{M}^{\mathbb{C}}_{\mathbf{k}',\mathbf{s}'}$ and ${\mathcal{M}}^{\mathbb{C}}_{\left(k_1,\frac{k-k_1}{s_1}+1\right),(s_1,1)}$ respectively.
\end{proof}
\begin{remark}
    In Algorithm \ref{factorial}, we observe that the vanishing ideal of $\mathcal{M}_{\mathbf{k},\mathbf{s}}^\CC$ can be constructed from the vanishing ideals of two-layer neurovarieties with specific architectures. It is worth mentioning that in many two-layer neurovarieties, the ideal generated by the minors stated in Corollary \ref{cor:result-two-layer} is not radical. Consequently, in Algorithm \ref{factorial}, taking the radical is unavoidable to find the vanishing ideal of $\cM_{\mathbf{k},\mathbf{s}}^\CC$.
However, finding the radical of an ideal is computationally expensive, so in an alternative variant of Algorithm \ref{factorial}, we can replace $\sqrt{\mathfrak{a}+\mathfrak{b}}$ and $\sqrt{\mathfrak I}$ with $\mathfrak{a}+\mathfrak{b}$ and $\mathfrak I$, respectively. This modification results in the final ideal being crafted from polynomials with integer coefficients. The common zero locus of these polynomials is the neurovariety ${\cM}_{\mathbf{k},\mathbf{s}}^\CC$. In Table \ref{tab:time-comparison}, we provide an overview of the running times associated with our algorithm across several architectures.
\end{remark}
\begin{table}[H]
    \centering
    \begin{tabular}{c|ccccc}
         & $(k_1,k_2)=$  & $(5,5)$ & $(6,6)$ & $(7,7)$ & $(8,8)$ \\
         \hline
        Kernel   & & $0.315$ s & $8.407$ s & $816.198$ s & $-$ \\
        Ideal $\mathfrak{I}$  & & \textbf{0.008} s & \textbf{0.177} s & \textbf{26.671} s & \textbf{40.647} s \\
    \end{tabular}
    \caption{Comparison of computation times (in seconds) for finding an ideal with vanishing set $\mathcal{M}_{(k_1,k_2),(3,1)}$ using \texttt{Macaulay2} on a system equipped with an AMD Ryzen 9 5900HX processor and 16GB of RAM. Row 1 displays the running times for computing the kernel of a ring homomorphism associated with the parametrization of the neurovariety. In Row 2, the running times correspond to the computation of the ideal $\mathfrak{I}$ following the approach outlined in Corollary \ref{cor:result-two-layer}.}
    \label{tab:time-comparison}
\end{table}
\begin{example}
\label{ex:3-layer}
Consider the family of degree $k-1=8$ homogeneous polynomials
\begin{equation}
    \label{eq:degree8}
    Ax^8+Bx^7y+Cx^6y^2+Dx^5y^3+Ex^4y^4+Fx^3y^5+Gx^2y^6+Hxy^7+Iy^8 \in \CC[x,y]_8.
\end{equation}
We seek algebraic conditions ensuring \eqref{eq:degree8} admits the factorization
\begin{equation}
    \label{eq:p3.ex}
    P_3P_2P_1=(fx^4+gy^4)(dx^2+ey^2)(ax^2+bxy+cy^2) \in \mathbb{C}[x,y]_8.
\end{equation}
The polynomials in \eqref{eq:p3.ex} correspond to the complex neurovariety $\mathcal{M}^{\mathbb{C}}_{(3,2,2),(2,2,1)}$. Following Algorithm \ref{factorial}, we derive conditions for \eqref{eq:degree8} to admit the decomposition
\begin{equation}
    \label{eq:p3.ex.merge12}
    (f x^4 + g y^4)(\alpha_1 x^4+\alpha_2 x^3y + \alpha_3 x^2y^2 + \alpha_4 xy^3 + \alpha_5 y^4).
\end{equation}
These polynomials correspond to the complex neurovariety $\mathcal{M}^{\mathbb{C}}_{(5,2),(4,1)}$. Applying the $4$-decomposition $\sigma_4$ to \eqref{eq:degree8} yields
\begin{equation}
    \label{eq:p3.ex.sigma}
     (fx^4+gy^4)(\alpha_1 x^4+\alpha_5 y^4,\alpha_4,\alpha_3,\alpha_2).
\end{equation}
After the change of variables $x^4 \to x$ and $y^4 \to y$, the polynomials in \eqref{eq:p3.ex.sigma} are written as $$(Ax^2+Exy+Iy^2, Dx+Hy, Cx+Gy, Bx+Fy).$$

By Corollary \ref{cor:result-two-layer}, the ideal $\mathfrak a$ is generated only by all $3\times 3$ minors in the resultant matrix
\begin{equation}
    R_2=\begin{bNiceMatrix}
        A & E & I \\
        B & F & 0 \\
        0 & B & F \\
        C & G & 0 \\
        0 & C & G \\
        D & H & 0 \\
        0 & D & H
    \end{bNiceMatrix},
\end{equation}
giving rise to $\binom{7}{3}=35$ homogeneous polynomials of degree $3$ with integer coefficients.

To compute the ideal $\mathfrak b$, consider polynomials \eqref{eq:degree8} that admit the factorization
\begin{equation}
    \label{eq:p3.ex.merge32}
    (\beta_4 x^6+\beta_3 x^4y^2+\beta_2 x^2y^4+\beta_1 y^6)(ax^2+bxy+cy^2).
\end{equation}
These polynomials correspond to the complex neurovariety $\mathcal{M}^{\CC}_{(3,4),(2,1)}$. To find the condition for \ref{eq:p3.ex.merge32}, we apply $\sigma_2$ to \eqref{eq:degree8}, and the change of variables $x^2 \to x$, and $y^2 \to y$, we get 
\begin{equation}
    \label{eq:ex-l3-l2}(Ax^4+Cx^3y+Ex^2y^2+Gxy^3+Iy^4,Bx^3+Dx^2y+Fxy^2+Hy^3).
\end{equation}
Again by Corollary \ref{cor:result-two-layer}, the condition stipulating that the two polynomials in \eqref{eq:ex-l3-l2} share at least three common roots is precisely defined by the set of all $3\times 3$ minors in the following resultant matrix 
\begin{equation}
    \label{eq:res-5}
    R_4=
    \begin{bNiceMatrix}
        A & C & E & G & I  \\
        B & D & F & H & 0 \\
        0 & B & D & F & H
    \end{bNiceMatrix}.
\end{equation}
This gives $\binom{5}{3}=10$ homogeneous polynomials of degree $3$ with integer coefficients. Finally, the vanishing ideal associated with $\mathcal{M}^{\mathbb{C}}_{(3,2,2),(2,2,1)}$ is $\sqrt{\mathfrak a+\mathfrak b}$. According to \texttt{Macaulay2}, this ideal is prime and minimally generated by 13 polynomials as follows:
\begin{align*}
   \sqrt{\mathfrak a+\mathfrak b}= \langle &DG-CH, DF-BH, CF-BG, \\
    &FGH-EH^2-F^2 I+DHI, BGH-AH^2-BFI, \\
    &DEH-AH^2-D^2 I, CEH-AGH-CDI, \\
    &BEH-AFH-BDI, BCH-ADH-B^2 I, \\
    &CEG-AG^2-C^2 I, BEG-AFG-BCI, \\
    &BEF-AF^2-B^2 I, BCD-AD^2-B^2 E+ABF \rangle.
\end{align*}

\end{example}

\section{Optimization Complexity of 1D-LCN}
\label{sec:Eucl}
In this section, we discuss the training process of a 1D-LCN using the quadratic loss function. Our analysis aims to gauge the algebraic complexity of this training process. 
\subsection{Training 1D-LCN}
In the realm of neural network training, we are given a set of $N$ data points $X=[\mathbf{x}_1|\cdots| \mathbf{x}_N]\in \RR^{d_0\times N}$, accompanied by corresponding labels $Y = [\mathbf{y}_1|\cdots|\mathbf{y}_N]\in \RR^{d_L\times N}$. The task at hand involves training a 1D-LCN using the quadratic loss function, i.e.,
 \begin{equation}
    \label{eq:opt}
    \min_{w \in \mathcal{M}_{\mathbf{k},\mathbf{s}}} \lVert \alpha_{w,s}(X)-Y \rVert^2,
\end{equation}
where $\alpha_{w,s}$ is a convolution with filter $w$ of size $k$ and stride $s = s_1\cdots s_L$. As outlined in Section \ref{sec:neuroman}, the neuromanifold $\mathcal{M}_{\mathbf{k},\mathbf{s}}$ is a semialgebraic set. Therefore, to understand the algebraic complexity of \eqref{eq:opt}, one needs to study the geometry of $\overline{\cM}_{\mathbf{k},\mathbf{s}}$ along with the relative boundary of $\cM_{\mathbf{k},\mathbf{s}}$ within $\overline{\cM}_{\mathbf{k},\mathbf{s}}$. The latter is known to be difficult to study (\cite{kohn2023function}). However, in cases where the architecture $(\mathbf{k},\mathbf{s})$ is reduced, almost always, every critical point associated with the optimization problem $\eqref{eq:opt}$ belongs to the smooth, relative interior of $\cM_{\mathbf{k},\mathbf{s}}$ (\cite[Theorem~2.11]{kohn2023function}). Therefore, in the case of reduced architectures, we can replace \eqref{eq:opt} with the following optimization problem:
\begin{equation}
    \label{eq:optt}
    \min_{w \in \overline{\cM}_{\mathbf{k},\mathbf{s}}} \lVert \alpha_{w,s}(X)-Y \rVert^2.
\end{equation}
In the next theorem, we express the number of complex critical points of \eqref{eq:optt} in terms of the generic EDdegree of a Segre variety. For simplicity, from now on, we only consider the matrix representation of a convolution $\alpha_{w,s}:\RR^d \to \RR^{d'}$, which we refer to as the \emph{convolutional matrix}.
\begin{theorem}
    \label{thm:opt}
Consider a reduced architecture denoted by $(\mathbf{k},\mathbf{s})$ with $L>1$. For almost all data pairs $(X,Y) \in \RR^{d_0 \times N} \times \RR^{d_L \times N}$, where $N\ge d_0$, the number of complex critical points of the optimization problem \eqref{eq:optt} equals the generic EDdegree of the Segre variety $\cS_{\mathbf{k}}$.
\end{theorem}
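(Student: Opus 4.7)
The plan is to rewrite the training loss as a weighted Euclidean distance problem, linearize the weight via a square-root substitution, and then appeal to Remark~\ref{rmk:polar} (via Lemma~\ref{lem:segre+neuro}) to transfer the count to the Segre variety. Since the convolutional matrix $A_w$ depends linearly on the filter $w$, the loss $\|A_w X - Y\|_F^2$ is quadratic in $w$. For generic $X$ with $N \geq d_0$, the Gram matrix $T_X := XX^\top$ is positive definite, so completing the square yields
$$\|A_w X - Y\|_F^2 \;=\; \|A_w - M^*\|_{T_X}^2 \;+\; C(X,Y),$$
where $M^* := YX^\top T_X^{-1}$, the constant $C(X,Y)$ is $w$-independent, and $\|M\|_{T_X}^2 := \operatorname{tr}(M T_X M^\top)$. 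Setting $L_X := T_X^{1/2}$, the problem is equivalent to the \emph{unweighted} Frobenius problem
$$\min_{w \in \overline{\cM}_{\mathbf{k},\mathbf{s}}} \|A_w L_X - M^* L_X\|_F^2,$$
whose feasible set is the linearly transformed variety $\phi_{L_X}(\overline{\cM}_{\mathbf{k},\mathbf{s}})$ with $\phi_{L_X}(w) := A_w L_X$. Theorem~2.11 of \cite{kohn2023function} ensures that, for reduced architectures, critical points almost always lie in the smooth relative interior of $\cM_{\mathbf{k},\mathbf{s}}$, so we may work entirely over $\mathbb{C}$.

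Next, I would invoke the principle that if $L$ is a generic invertible linear map on the ambient space, then $\operatorname{EDdeg}(L\cdot\cV) = \operatorname{gEDdeg}(\cV)$ --- a variety in generic coordinates realizes its generic EDdegree. Applied to $\cV = \phi(\overline{\cM}_{\mathbf{k},\mathbf{s}}) \subset \mathbb{R}^{d_L \times d_0}$ under the right-multiplication action $M \mapsto M L_X$, this would give a critical-point count equal to $\operatorname{gEDdeg}(\phi(\overline{\cM}_{\mathbf{k},\mathbf{s}}))$. Since $\phi$ is a linear injection, the generic EDdegree is preserved, i.e.\ $\operatorname{gEDdeg}(\phi(\overline{\cM}_{\mathbf{k},\mathbf{s}})) = \operatorname{gEDdeg}(\overline{\cM}_{\mathbf{k},\mathbf{s}})$. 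Finally, Remark~\ref{rmk:polar} identifies this invariant with $\operatorname{gEDdeg}(\cS_{\mathbf{k}})$ under our hypotheses, yielding the stated formula.

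The main obstacle lies in the ``generic coordinates'' step: the transformation $L_X$ is not arbitrary but varies only over a restricted Kronecker-structured family (since $L_X$ acts by right multiplication and arises from $XX^\top$). For the rank-bounded matrix variety this family is too restrictive --- right multiplication preserves rank, so one only recovers $\operatorname{EDdeg}$, not $\operatorname{gEDdeg}$, as noted in Example~\ref{ex:norms}. For 1D-LCN neurovarieties with $L > 1$ and reduced architecture, however, $\phi(\overline{\cM}_{\mathbf{k},\mathbf{s}})$ is \emph{not} preserved under right multiplication, which is the key point enabling the stronger count. I would close the gap via upper semicontinuity: the complex critical-point count is Zariski upper semicontinuous in $(X,Y)$, bounded above by $\operatorname{gEDdeg}(\overline{\cM}_{\mathbf{k},\mathbf{s}})$, and generically constant; thus it suffices to exhibit one pair $(X_0, Y_0)$ for which the bound is attained. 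Taking $X_0$ so that $T_{X_0}$ is a sufficiently generic positive definite matrix (permissible whenever $N \geq d_0$) provides $\binom{d_0+1}{2}$ free parameters, which combined with the $d_L d_0$ free parameters of $M^*$ should suffice --- by a dimension count together with the non-degeneracy of the convolutional parametrization --- to place the weight-data pair outside the proper closed bad locus where the critical-point count drops below $\operatorname{gEDdeg}(\cS_{\mathbf{k}})$.
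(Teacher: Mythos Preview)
Your argument begins exactly as the paper does---completing the square to turn the loss into a weighted distance $\lVert \alpha_{w,s}-U\rVert_{XX^\top}^2$---and you correctly flag the real obstacle: the weight $T_X=XX^\top$ acts by right multiplication, so it ranges only over a Kronecker-structured family of quadratic forms on $\RR^{d_L\times d_0}$, not over all of $\operatorname{Sym}_{d_Ld_0}(\RR)$. Where the proposal breaks down is the resolution. Upper semicontinuity and a dimension count do not by themselves show that the restricted family meets the generic locus: knowing that $T_X$ carries $\binom{d_0+1}{2}$ free parameters says nothing about whether the induced quadratic form on the $k$-dimensional filter space is generic. A high-dimensional source can easily map into a proper subvariety of the target, and ``should suffice'' is precisely the step that needs proof.

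The paper closes this gap with a concrete linear-algebraic computation that you are missing. Rather than staying in $\RR^{d_L\times d_0}$, it pulls the weighted norm back to the filter coordinates $w\in\RR^k$: there exists an explicit linear map $\psi:\cV_{d_0\times d_0}\to\cV_{k\times k}$, given by $\psi(M)_{ij}=\sum_{m=0}^{d_L-1}M_{i+sm,\,j+sm}$, such that
\[
\lVert \alpha_{w,s}-\alpha_{u,s}\rVert_{XX^\top}^2 \;=\; (w-u)^\top\,\psi(XX^\top)\,(w-u).
\]
The crucial point is that $\psi$ is \emph{surjective} from $\operatorname{SPD}_{d_0}(\RR)$ onto $\operatorname{SPD}_k(\RR)$ (each $k\times k$ principal block of a PD matrix is PD, and one can choose preimages block-diagonally). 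Hence as $X$ varies with $N\ge d_0$, the form $\psi(XX^\top)$ sweeps out a Zariski-dense subset of $\operatorname{Sym}_k(\RR)$, which is exactly what ``generic quadratic form on $\RR^k$'' means. Once you have this, the critical-point count is $\operatorname{gEDdeg}(\overline{\cM}_{\mathbf{k},\mathbf{s}})$ by definition, and Remark~\ref{rmk:polar} finishes the job. Your square-root substitution and the detour through $\phi_{L_X}$ are unnecessary; the whole point is to work intrinsically on $\RR^k$ and show that the induced form there is unconstrained.
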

\begin{proof}
If $X \in \RR^{d_0\times N}$ is generic with $N\ge d_0$, note that $X$ is of full rank. Hence, by defining $U:=YX^\top(XX^\top)^{-1}$, the minimization problem \eqref{eq:optt} is equivalent to 
    \begin{align}
    \label{eq:opt1}
        \min_{w \in \overline{\cM}_{\mathbf{k},\mathbf{s}}} \lVert \alpha_{w,s}-U \rVert^2_{XX^\top} &= \min_{w \in \overline{\cM}_{\mathbf{k},\mathbf{s}}} \operatorname{tr}[(\alpha_{w,s}-U)^\top XX^\top (\alpha_{w,s}-U)]\\
        \label{eq:opt2}
        &= \min_{w \in \overline{\cM}_{\mathbf{k},\mathbf{s}}} \operatorname{tr}[(\alpha_{w,s}-\alpha_{u,s})^\top XX^\top (\alpha_{w,s}-\alpha_{u,s})],
    \end{align}
where $\alpha_{u,s}$ is a convolutional matrix obtained by the projection of $U$ using the scalar product $\langle \cdot,\cdot \rangle_{XX^\top}$ onto the space of convolutional matrices with filter size $k$ and stride $s$. We are interested in expressing \eqref{eq:opt2} only with filters instead of convolutional matrices. To do that, let's denote $\operatorname{Sym}_n(\RR)$ (resp., $\operatorname{SPD}_n(\RR)$) to be the space of symmetric (resp., symmetric positive definite) matrices of size $n$ over the real numbers. We define the linear map $\psi$ from the space of square matrices of size $d_0$ to the space of square matrices of size $k$ as follows:
\begin{align}
\label{eq:psi}
 \psi:\cV_{d_0\times d_0} \to \cV_{k\times k}, \quad M  \mapsto  \left[\sum_{m=0}^{d_L-1}M_{i+sm,j+sm}\right]_{ij},
\end{align} 
where $d_0 = k+(d_L-1)s$. In other words, the map $\psi$ sums submatrices $M_0,\ldots, M_{d_L-1}$ of $M$, where each $M_r$ is
\begin{equation}
    \label{eq:M_r}
    M_r = \begin{bNiceMatrix}
    M_{1+rs,1+rs}&\cdots & M_{1+rs,k+rs}\\
    \vdots & & \vdots\\
    M_{k+rs,1+rs}& \cdots & M_{k+rs,k+rs} 
\end{bNiceMatrix}.
\end{equation}
Note that if $M$ is symmetric (resp., positive definite) then each $M_r$ is also symmetric (resp., positive definite). Hence, we can make the following useful observation:
\begin{equation}
    \label{eq:observ}
    \psi(\operatorname{Sym}_{d_0}(\RR))=\operatorname{Sym}_k(\RR)  \quad,\quad \psi(\operatorname{SPD}_{d_0}(\RR))=\operatorname{SPD}_k(\RR).
\end{equation}

Now, we can rewrite \eqref{eq:opt2} by replacing $\alpha_{w,s}$ and $\alpha_{u,s}$ with their corresponding filters as follows
\begin{equation}
    \label{eq:opt3}
    \min_{w\in \overline{\cM}_{\mathbf{k},\mathbf{s}}} (w-u)^\top\psi(XX^\top)(w-u),
\end{equation}
where according to \eqref{eq:observ}, $\psi(XX^\top)$ is a generic matrix in $\operatorname{SPD}_k(\RR)$. Since $\operatorname{SPD}_k(\RR)$ is Zariski dense in $\operatorname{Sym}_k(\RR)$, we deduce that the number of complex critical points that appear in problem \eqref{eq:opt3} is the generic EDdegree of $\overline{\cM}_{\mathbf{k},\mathbf{s}}$. According to Remark \ref{rmk:polar}, this number is equal to $\operatorname{gEDdeg}(\cS_{\mathbf{k}})$.
\end{proof}
The closed form of the generic EDdegree of $\cS_{\mathbf{k}}$ (or equivalently $\overline{\cM}_{\mathbf{k},\mathbf{s}}$), denoted by $C_{\mathbf{k}}$ is as follows:
 
\begin{equation}
\label{eq:ed-formula}
C_{\mathbf{k}}:=\sum_{t=0}^{\overline{k}} (-1)^t (2^{\overline{k}+1-t}-1)(\overline{k}-t)! \left[ \sum_{\substack{i_1+\cdots+i_L=t \\ i_j \le k_j, \forall  j}} \frac{\binom{k_1+1}{i_1}\cdots \binom{k_L+1}{i_L}}{(k_1-i_1)!\cdots (k_L-i_L)!} \right ],
\end{equation}
where $\overline{k}=\dim \cS_{\mathbf{k}} =k_1+\cdots +k_L-L$  (\cite[Proposition~3.5]{kozhasov2023minimal}). 

The following corollary directly follows from Theorem \ref{thm:opt} and the formula in \eqref{eq:ed-formula}. 
\begin{corollary}
\label{cor:arrange}
The complex critical points appearing in \eqref{eq:optt}, as per the conditions outlined in Theorem \ref{thm:opt}, demonstrate the following properties:

\begin{enumerate}
\item The count is unaffected by variations in the strides $s_i$ or the arrangement of filter sizes in the architecture.
\item It can exhibit exponential growth with respect to the filter sizes $k_i$; see  \cite[A231482]{oeis} and Table \ref{tab:c_k_twolayer}.
\end{enumerate}\end{corollary}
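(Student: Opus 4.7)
The plan is to read off both claims from the identification in Theorem~\ref{thm:opt} together with the closed form~\eqref{eq:ed-formula}. By Theorem~\ref{thm:opt}, the number of complex critical points of \eqref{eq:optt} equals $\operatorname{gEDdeg}(\cS_{\mathbf{k}})$, i.e.\ the constant $C_{\mathbf{k}}$ of \eqref{eq:ed-formula}. Stride invariance is then immediate, since the Segre variety $\cS_{\mathbf{k}}$ is an object attached only to the tuple $\mathbf{k}$ of filter sizes; the strides enter Theorem~\ref{thm:opt} only through the reducedness hypothesis (via Lemma~\ref{lem:segre+neuro}) and the linear projection $\Psi$, and are then discarded. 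One can also verify this directly: no $s_i$ appears anywhere on the right-hand side of \eqref{eq:ed-formula}.

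For the arrangement part of (1), I would give two complementary arguments. The first is geometric: permuting the tensor factors of $\CC^{k_1 \times \cdots \times k_L}$ is an invertible linear isomorphism of the ambient space that carries $\cS_{\mathbf{k}}$ isomorphically onto $\cS_{\pi(\mathbf{k})}$ for any permutation $\pi \in S_L$, and the generic EDdegree is an invariant of the embedded projective variety up to such ambient isomorphisms (by the polar-class description in Remark~\ref{rmk:polar}, since polar classes only depend on the variety and its dual). The second is purely combinatorial: in \eqref{eq:ed-formula} the exponent $\overline{k} = k_1 + \cdots + k_L - L$ is symmetric in the $k_i$, and the bracketed sum runs over all compositions $(i_1, \ldots, i_L)$ with $i_j \le k_j$ of a fixed $t$, with summand symmetric under simultaneous permutation of the $i_j$'s and $k_j$'s; so $C_{\mathbf{k}} = C_{\pi(\mathbf{k})}$.

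For part (2), I would invoke the reference to the OEIS entry \cite[A231482]{oeis} together with Table~\ref{tab:c_k_twolayer} to exhibit explicit numerical exponential growth of $C_{(n,n)}$ as a function of $n$. To make this rigorous from \eqref{eq:ed-formula} alone, I would specialise to $L=2$ with $k_1 = k_2 = n$ (so $\overline{k} = 2n-2$) and estimate $C_{(n,n)}$ from below by isolating the contribution of the $t=0$ term,
\begin{equation*}
(2^{2n-1}-1)(2n-2)! \cdot \frac{1}{(n!)^2},
\end{equation*}
which already grows like $2^{2n-1} \binom{2n-2}{n-1}/(2n-1) \sim c \cdot 4^n / n^{3/2}$ by Stirling, and then bound the remaining alternating tail by standard Abel-summation or pairing of consecutive terms.

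The main obstacle is the rigorous lower bound in part (2), since the alternating signs in \eqref{eq:ed-formula} prevent a naive termwise estimate. The cleanest route I envisage is to observe that the factor $2^{\overline{k}+1-t}-1$ decays geometrically in $t$ while the factorial weights in the bracket grow only polynomially, so after a small constant number of terms the tail is dominated by the leading one; this should be enough to conclude that $C_{(n,n)}$ grows at least like $4^n/n^{3/2}$, which is exponential in $n$ and hence in the filter sizes.
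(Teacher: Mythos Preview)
Your argument for part~(1) is correct and is exactly what the paper intends: the corollary is stated there without a separate proof, as an immediate consequence of Theorem~\ref{thm:opt} and the closed form~\eqref{eq:ed-formula}, so your reading-off of stride independence and permutation symmetry (either geometrically via $\cS_{\mathbf{k}}\cong\cS_{\pi(\mathbf{k})}$ or combinatorially from the symmetry of~\eqref{eq:ed-formula}) matches the paper's implicit reasoning.

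For part~(2) you actually go further than the paper does. The paper treats the exponential-growth claim as an empirical observation, supported only by the OEIS reference and Table~\ref{tab:c_k_twolayer}; no asymptotic estimate is proved. Your attempt to extract a rigorous lower bound from the $t=0$ term of~\eqref{eq:ed-formula} is a reasonable strategy, but two points deserve care. First, a small arithmetic slip: with $k_1=k_2=n$ the $t=0$ term is $(2^{2n-1}-1)(2n-2)!/(n!)^2 = (2^{2n-1}-1)\binom{2n-2}{n-1}/n^2$, not $/(2n-1)$, so the leading growth is of order $8^n/n^{5/2}$ rather than $4^n/n^{3/2}$ (still exponential, of course). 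Second, your sketch for controlling the alternating tail is not yet a proof: the factor $(\overline{k}-t)!$ decreases in $t$, but the bracketed multinomial sum can grow substantially before it falls off, so neither ``geometric decay of $2^{\overline{k}+1-t}-1$'' nor naive pairing of consecutive terms suffices without a genuine estimate of that inner sum. Since the paper itself does not claim a rigorous bound here, your proposal already exceeds what is required; if you want to make the asymptotics airtight you would need a sharper argument, but for the purposes of this corollary the reference to \cite[A231482]{oeis} and Table~\ref{tab:c_k_twolayer} is all the paper offers.
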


 \begin{table}[H]
    \centering
    \renewcommand{\arraystretch}{1.2}
    \begin{tabular}{c|ccccccccc}
        & $k_2=2$ & $k_2=3$ & $k_2=4$ & $k_2=5$ & $k_2=6$ & $k_2=7$ & $k_2=8$ & $k_2=9$  \\
        \hline
        $k_1=2$ & 6 & 10 & 14 & 18 & 22 & 26 & 30 & 34  \\
        $k_1=3$ & 10 & 39 & 83 & 143 & 219 & 311 & 419 & 543 \\
        $k_1=4$ & 14 & 83 & 284 & 676 & 1324 & 2292 & 3644 & 5444  \\
        $k_1=5$ & 18 & 143 & 676 & 2205 & 5557 & 11821 & 22341 & 38717  \\
        $k_1=6$ & 22 & 219 & 1324 & 5557 & 17730 & 46222 & 104026 & 209766  \\
        $k_1=7$ & 26 & 311 & 2292 & 11821 & 46222 & 145635 & 388327 & 910171  \\
        $k_1=8$ & 30 & 419 & 3644 & 22341 & 104026 & 388327 & 1213560 & 3288712  \\
        $k_1=9$ & 34 & 543 & 5444 & 38717 & 209766 & 910171 & 3288712 & 10218105  \\
        
    \end{tabular}
    \caption{The generic EDdegree of neurovariety $\overline{\cM}_{(k_1,k_2),(s_1,1)}$,  where $s_1>1$.}
    \label{tab:c_k_twolayer}
\end{table}
\subsection{Training with Large Number of Layers}
In the context of deep neural networks, the term ``deep" typically denotes a network with more than two layers. Numerous empirical experiments suggest that the capacity to uncover complex patterns within a given dataset is positively influenced by the increasing number of layers (\cite{uzair2020effects}). Also, while training deep models, it is yet unknown why bad local minima are not ``usually" attained by gradient methods (\cite{swirszcz2016local}). 

In the 1D-LCN setting, the number of all complex critical points in \eqref{eq:optt} is dictated by $C_{\mathbf{k}}$ as defined in \eqref{eq:ed-formula}. Many of these critical points possibly correspond to valid local minima. In such instances, one plausible explanation is that many local minima exist but are primarily concentrated in the close proximity of the global minimum or, at the very least, ``good" local minima. In deeper networks, this phenomenon becomes particularly intriguing as it appears that the number of critical points increases with the number of layers, see Figure \ref{fig:tree}.

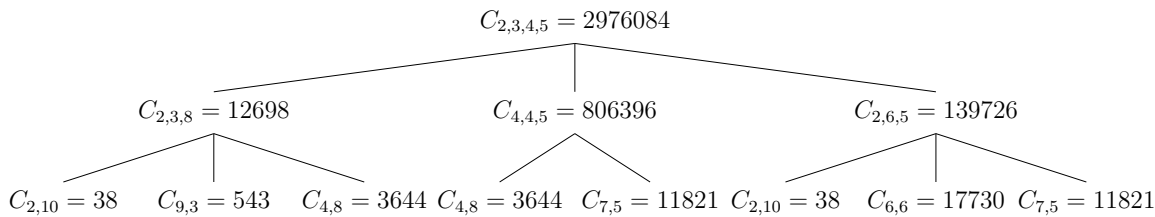
\begin{figure}[H]
\centering
\scalebox{0.8}{ 
\begin{tikzpicture}
[
    level 1/.style={sibling distance=60mm}, 
    level 2/.style={sibling distance=25mm}, 
    level 3/.style={sibling distance=20mm}, 
]
    \node {$C_{2,3,4,5}=2976084$}
    child {node {$C_{2,3,8}=12698$}
        child {node {$C_{2,10}=38$}}
        child {node {$C_{9,3}=543$}}
        child {node {$C_{4,8}=3644$}}}
    child {node {$C_{4,4,5}=806396$}
        child {node {$C_{4,8}=3644$}}
        child {node {$C_{7,5}=11821$}}    
    }
    child {
        node {$C_{2,6,5}=139726$}
        child {node {$C_{2,10}=38$}}
        child {node {$C_{6,6}=17730$}}
        child {node {$C_{7,5}=11821$}}
    };
\end{tikzpicture}
}
\caption{
The top row displays the value of $C_{2,3,4,5}$ for a reduced $4$-layer architecture. Subsequent rows showcase the computation of various $C_{\mathbf{k}}$ values for architectures derived by merging two layers at the parent node while preserving the dimension of the neuromanifold. }
\label{fig:tree}
\end{figure}

\begin{remark}
    In a fully connected linear network, the neurovariety is the space of rank $r \le \min(m,n)$ matrices $\cV_{m \times n}^r$ (\cite{geometryLinearNets}). As discussed in Example \ref{ex:norms}, training this network with the loss function \ref{eq:weighted-frob} gives rise to $\binom{\min(m,n)}{r}$ critical points, which is the EDdegree of $\cV_{m\times s}^{r}$. This number does not depend on the number of layers and is significantly lower than $C_{\mathbf{k}}$, the number of critical points that appear in training a 1D-LCN with the same number of parameters. 
\end{remark}

\subsubsection*{Acknowledgments}
I would like to express my sincere gratitude to my supervisors, Kathlén Kohn and Joakim Andén. Kathlén provided invaluable feedback, her profound insights in mathematics, and engaging conversations that greatly enriched my understanding. Joakim's feedback and applied perspectives were instrumental in shaping the direction of this work.

Special thanks to my dear friend Felix Rydell, with whom I enjoyed insightful conversations about metric algebraic geometry. Felix also provided valuable feedback on this article, contributing to its improvement.

I extend my thanks to Bernd Sturmfels and Luca Sodomaco for our discussions on Euclidean distance optimization, which added depth to my research.

\bibliographystyle{alpha}
\bibliography{literature}

\end{document}